\documentclass[10pt,fleqn,draft]{article}
\usepackage[dvips]{epsfig}
\input{epsf}
\usepackage{amsmath,amssymb,amsfonts}
\usepackage{amscd}
\usepackage[dvips,matrix,arrow]{xy}



\numberwithin{equation}{subsection}
\numberwithin{figure}{subsection}

\newtheorem{theorem}{Theorem}[subsection]
\newtheorem{lemma}[theorem]{Lemma}
\newtheorem{remark}[theorem]{Remark}
\newtheorem{proposition}[theorem]{Proposition}
\newtheorem{corollary}[theorem]{Corollary}

\def\pr{\operatorname{pr}}%
\def\dist{\operatorname{dist}}%
\def\eff{\operatorname{Eff}}%
\def\ef{\operatorname{Ef}}%
\def\min{\operatorname{min}}%

\newenvironment{proof}[1][Proof:]{\begin{trivlist}
\item[\hskip \labelsep {\bfseries #1}]}{\end{trivlist}}

\newenvironment{keyword}[1][Key-words:]{\begin{trivlist}
\item[\hskip \labelsep {\bfseries #1}]}{\end{trivlist}}

\newenvironment{codesams}[3][AMS codes:]{\begin{trivlist}
\item[\hskip \labelsep {\bfseries #1}]}{\end{trivlist}}

\newenvironment{codesjel}[3][JEL code:]{\begin{trivlist}
\item[\hskip \labelsep {\bfseries #1}]}{\end{trivlist}}

\newcommand{\qed}{\nobreak \ifvmode \relax \else
      \ifdim\lastskip<1.5em \hskip-\lastskip
      \hskip1.5em plus0em minus0.5em \fi \nobreak
      \vrule height0.75em width0.5em depth0.25em\fi}

 0 3 \font\ccc =msbm10 

\date{ }

\begin{document}

\title{On Markowitz Geometry}

\author{Valentin Vankov Iliev\\
Institute of Mathematics and Informatics,\\
Bulgarian Academy of Sciences, Sofia 1113, Bulgaria.\\
E-mail address: viliev@math.bas.bg}

\maketitle

\begin{abstract}

By Markowitz geometry we mean the intersection theory of ellipsoids
and affine subspaces in a real finite-dimensional linear space. In
the paper we give a meticulous and self-contained treatment of this
arch-classical subject, which lays a solid mathematical groundwork
of Markowitz mean-variance theory of efficient portfolios in
economics.

\end{abstract}

\begin{keyword}

ellipsoid, affine subspace, Markowitz mean-variance theory,
efficient financial portfolio.

\end{keyword}

\begin{codesams}

\hbox{15A63; 15A99; 49K35}

\end{codesams}

\begin{codesjel}

    \hbox{G110}

\end{codesjel}

\section{Introduction and Notation}

\label{0}

\subsection{Introduction}

\label{0.1}

In this paper we solve the following extremal problem: Given a
positive dimensional affine subspace $C\subset\hbox{\ccc R}^n$, a
linear form $\pi$ which is not constant on $C$, and a positive
definite quadratic form $v$ on $\hbox{\ccc R}^n$, find all points
$x_0\in C$ such that
\begin{equation}
\pi(x_0)=\max_{x\in C, v\left(x\right)\leq v\left(x_0\right)}\pi(x)
\hbox{\rm\ and\ } v(x_0)=\mathop{\min}_{x\in C,
\pi\left(x\right)\geq \pi\left(x_0\right)}v(x).\label{0.1.1}
\end{equation}
It turns out that the locus of solutions of~(\ref{0.1.1}) is a ray
$E$ in $C$ whose endpoint $x_0$ is the foot of the perpendicular
$\varrho_0$ from the origin $O$ of the coordinate system to the
affine space $C$ (perpendicularity is with respect to the scalar
product obtained from $v$ via polarization). Let $h_r$ be the
hyperplane with equation $\pi(x)=r$, $r\in\hbox{\ccc R}$, and let
$\varrho_r$ be the perpendicular from $O$ to the affine subspace
$C\cap h_r$. If $\pi(x_0)=r_0$, $v(x_0)=a_0$, then
$E=\{\varrho_r\mid r\geq r_0\}$, $\varrho_0=\varrho_{r_0}$, and the
levels $r=\pi(x)$ and $a=v(x)$ are quadratically related along $E$:
$a=cr^2$.

Let $x={}^t(x_1,\ldots,x_n)$ be the generic vector in $\hbox{\ccc
R}^n$, let $M$ be a proper subset of the set $[n]=\{1,\ldots,n\}$ of
indices, and let $C=\cap_{j\in M}h^{\left(j\right)}$, where
$h^{\left(j\right)}$ are linearly independent hyperplanes with
equations
\begin{equation}
\pi^{\left(j\right)}(x)=\tau_j$,\hbox{\ } $\tau_j\in\hbox{\ccc R},
\hbox{\ }j\in M.\label{0.1.5}
\end{equation}
In case the hyperplane $\Pi=\{x\mid x_1+\cdots+x_n=1\}$ is one of
$h^{\left(j\right)}$'s, we may interpret $x\in C$ as an $n$-assets
financial portfolio, subject to the linear
constraints~(\ref{0.1.5}). Next, under certain conditions,
see~\ref{3.5}, we may interpret $\pi(x)$ as the expected return on
the portfolio $x$ and $v(x)$ as its risk. Finally, we may interpret
the elements of $E$ as efficient portfolios from Markowitz
mean-variance theory in economics, considered from purely
geometrical point of view. The famous pioneering work~\cite{[20]} is
written in this fashion and the condition for nonnegativity of the
variables (due to lack of short sales) distorts the picture there
and forces the use of variants of simplex method in Markowitz's
monograph~\cite{[25]}. Thus, instead of the ray $E$ of efficient
portfolios, we have to examine a more sophisticated piecewise set
$E_M$ of linear segments enclosed in the compact trace $\Delta$ of
the unit simplex in $\Pi$ on $C$. If $x_0\in E_M\backslash
E\cap\Delta$, then
\[
\pi(x_0)<\max_{x\in C, v\left(x\right)\leq v\left(x_0\right)}\pi(x)
\hbox{\rm\ or\ } v(x_0)>\mathop{\min}_{x\in C, \pi\left(x\right)\geq
\pi\left(x_0\right)}v(x),
\]
that is, the maximum $\pi(x_0)$ of the expected return decreases or
the minimum $v(x_0)$ of the risk increases, which is our point of
departure.

In section 1, Theorem~\ref{1.5.20}, we show that the trace $Q_a\cap
C$ of an ellipsoid $Q_a$ with equation $v(x)=a$ in $\hbox{\ccc R}^n$
on the affine space $C$ is again an ellipsoid in case $a\geq
\gamma_M(\tau)$, where $\gamma_M(\tau)$ is a positive definite
quadratic form in the variables $\tau=(\tau_j)_{j\in M}\in\hbox{\ccc
R}^M$. The center of the ellipsoid $Q_a\cap C$ is the foot of the
perpendicular $\varrho_0$ from $O$ to $C$, and, moreover, we find
its equation in terms of appropriate coordinates on $C$.

The inequality $a\geq\gamma_M(\tau)$ determines an "elliptic" cone
$\hat{\gamma}_M$ in $\hbox{\ccc R}\times\hbox{\ccc R}^M$, which is
the base of the bundle $\xi$ described in Theorem~\ref{1.10.10}. By
dragging the ellipsoids $a=\gamma_M(\tau)$ "upward" ($a$ is
increasing) we establish a real algebraic variety $\Gamma_M$ which
is the frontier of $\hat{\gamma}_M$ and branch locus of $\xi$. The
fibres of $\xi$ over the points in the interior of $\hat{\gamma}_M$
are ellipsoids which degenerate into their centers over $\Gamma_M$.
Using this bundle, we obtain that the image (the shadow) of an
ellipsoid in $\hbox{\ccc R}^n$ via projection parallel to some
subspace, is again an ellipsoid --- see Proposition~\ref{1.10.25}.

In section 2 we prove some extremal properties of the tangential
points of members of a family of eccentric ellipsoids and parallel
hyperplanes in $\hbox{\ccc R}^n$. These two sections stick together
in section 3 where we prove that the ray $E$ is the locus of all
efficient Markowitz portfolios and give interpretation of the
geometrical results in terms of Markowitz mean-variance theory.

\subsection{Notation}

\label{0.5}

For any positive integer $n$ we identify the members of the real
linear space $\hbox{\ccc R}^n$ with matrices of type $n\times 1$:
$x={}^t(x_1,\ldots,x_n)$, where the sign ${}^t$ means the transpose
of a matrix. We set $O={}^t(0,\ldots,0)\in \hbox{\ccc R}^n$ and
denote by $(e_i)_{i=1}^n$ the standard basis in $\hbox{\ccc R}^n$.
Say that $M=\{j_1,\ldots,j_m\}$, $j_1<\cdots<j_m$, be a proper
subset of the set of indices $[n]=\{1,\ldots,n\}$. Given a vector
$x={}^t(x_1,\ldots,x_n)$, we denote by $x^{\left(M\right)}$ the
vector ${}^t(x_{j_1},\ldots,x_{j_m})\in \hbox{\ccc R}^M$. Moreover,
indexed Greek letters $\tau^{\left(M\right)}$, etc., mean vectors
${}^t(\tau_{j_1},\ldots,\tau_{j_m})$, etc., from the linear space
$\hbox{\ccc R}^M$. In case $K$ is a proper subset of the set $M$ and
we fix all $\tau_j$, $j\in K$, and vary $\tau_j$, $j\in L$, where
$L=M\setminus K$, then, with some abuse of notation (the fixed
components are supposed to be known), we write
$\tau^{\left(M\right)}=\tau^{\left(L,K\right)}$.

Given a symmetric $n\times n$ matrix $Q$, by $Q^{\left(M\right)}$ we
denote the principal $m\times m$ submatrix of $Q$, obtained by
suppressing the rows and columns with indices which are not in $M$.

For a positive definite quadratic form $v(x)={}^t\!xQx$ on
$\hbox{\ccc R}^n$ with matrix $Q$ we denote $Q_a=\{ x\in \hbox{\ccc
R}^n\mid v(x)=a\}$, $a\geq 0$. The set $Q_a$ is an ellipsoid with
center $O$ in $\hbox{\ccc R}^n$ for all $a>0$. In case $n=1$ the
"ellipsoid" $Q_a$ consists of two (possibly coinciding) points. We
extend this terminology by defining the singleton $\{O\}$ to be an
"ellipsoid" when $a=0$ as well as in the case of zero-dimensional
linear space.

For any $a\geq 0$ we denote $Q_{\leq a}=\{x\in \hbox{\ccc R}^n\mid
v(x)\leq a\}$ and $Q_{<a}=\{x\in \hbox{\ccc R}^n\mid v(x)<a\}$. Note
that $Q_{\leq a}$ and $Q_{< a}$ are strictly convex sets.

We let $\pi(x)=p_1x_1+\cdots+p_nx_n$ be a linear form and let us
denote by $h_r$ the hyperplane in $\hbox{\ccc R}^n$, defined by the
equation $\pi(x)=r$, $r\in\hbox{\ccc R}$. Let $h_r(\leq)$ denote the
half-space $\{x\in \hbox{\ccc R}^n\mid \pi(x)\leq r\}$. The meaning
of notation $h_r(\geq)$, $h_r(<)$, and $h_r(>)$ is clear.

The standard scalar product $(x,y)={}^t\!xy$ in $\hbox{\ccc R}^n$
produces the standard norm $\|x\|$ with $\|x\|^2=(x,x)$. We set
$S^{n-1}=\{x\in \hbox{\ccc R}^n\mid \|x\|=1\}$ (the unit sphere).

The scalar product  $\langle x,y\rangle={}^t\!xQy$ in $\hbox{\ccc
R}^n$ produces the $Q$-norm $\|x\|_Q$ with $\|x\|_Q^2=\langle
x,x\rangle=v(x)$ and the $Q$-distance $\dist_Q(x,y)=\|x-y\|_Q$.
Thus, the ellipsoid $Q_a$ is a $Q$-sphere with $Q$-radius
$\sqrt{a}$. Two vectors $x$ and $y$ are said to be
$Q$-perpendicular, if $\langle x,y\rangle=0$.

Throughout the rest of the paper we assume that $n$ is a positive
integer and $m$ is a nonnegative integer with $m<n$. Moreover, we
suppose that if a proper subset $M$ of the set $[n]$ of indices is
given as a list: $M=\{j_1,\ldots,j_m\}$, then $j_1<\cdots<j_m$.

\section{Ellipsoids and Affine Subspaces}

\label{1}

\subsection{Intersections of Quadric Hypersurfaces\\
and Affine Subspaces }

\label{1.1}

Let $M\subset [n]$ be a set of indices of size $m$,
$M=\{j_1,\ldots,j_m\}$, and let $(h^{\left(j\right)})_{j\in M}$ be a
family of linearly independent affine hyperplanes in $\hbox{\ccc
R}^n$. The system of coordinates can be chosen in such a way that
the hyperplane $h^{\left(j\right)}$ has equation $x_j=\tau_j$,
$\tau_j\in\hbox{\ccc R}$. We denote by $h(\tau^{\left(M\right)})$
the intersection $\cap_{j\in M}h^{\left(j\right)}$. The family
$\{h(\tau^{\left(M\right)})\mid\tau^{\left(M\right)}\in\hbox{\ccc
R}^M\}$ consists of all $(n-m)$-dimensional affine spaces in
$\hbox{\ccc R}^n$, which are orthogonal to the $m$-dimensional
vector subspace generated by the vectors $e_j$, $j\in M$.

Let $Q=(q_{ij})_{i,j=1}^n$ be a symmetric matrix. For any $j\in M$
we denote by $\rho_{-,j}^{\left(Q;M^c\right)}$ the $j$-th column of
the $(n-m)\times n$ matrix obtained from $Q$ by deleting the rows
indexed by the elements of $M$. Thus,
$\rho_{-,j}^{\left(Q;M^c\right)}$ is a vector in $\hbox{\ccc
R}^{n-m}$ with components $\rho_{i,j}^{\left(Q;M^c\right)}=q_{ij}$,
$i\in M^c$. Given a vector $\tau^{\left(M\right)}\in\hbox{\ccc
R}^M$, $\tau^{\left(M\right)}={}^t(\tau_{j_1},\ldots,\tau_{j_m})$,
we set $\rho_{-,\tau^{\left(M\right)}}^{\left(Q;M^c\right)}=
\sum_{k=1}^m\tau_{j_k}\rho_{-,j_k}^{\left(Q;M^c\right)}$. By
\[
\alpha_{\left(Q;M\right)}(x)= \sum_{j,k\in M}^nq_{jk}x_jx_k
\]
we denote the quadratic form which corresponds to the principal
submatrix $Q^{\left(M\right)}$ of $Q$.

Let $M^c=\{i_1,\ldots,i_{n-m}\}$. In case the submatrix
$Q^{\left(M^c\right)}$ is invertible, let
\[
x^{\left(M^c\right)}= {}^t(c_{i_1}^{\left(Q;M^c\right)}
(\tau^{\left(M\right)}),\ldots,
c_{i_{n-m}}^{\left(Q;M^c\right)}(\tau^{\left(M\right)}))
\]
be the solution of the matrix equation
\begin{equation}
Q^{\left(M^c\right)}x^{\left(M^c\right)}=
-\rho_{-,\tau^{\left(M\right)}}^{\left(Q;M^c\right)}.\label{1.1.25}
\end{equation}
We set
\[
c^{\left(Q;M^c\right)}(\tau^{\left(M\right)})=
{}^t(c_1^{\left(Q;M^c\right)}(\tau^{\left(M\right)}),\ldots,
c_n^{\left(Q;M^c\right)}(\tau^{\left(M\right)})),
\]
where $c_j^{\left(Q;M^c\right)}(\tau^{\left(M\right)})=\tau_j$ for
$j\in M$. In particular,
$c^{\left(Q;M^c\right)}(\tau^{\left(M\right)})\in
h(\tau^{\left(M\right)})$. In case $L\subset M$,
$L=\{\ell_1,\ldots,\ell_{\lambda}\}$, we set
\[
c_L^{\left(Q;M^c\right)}(\tau^{\left(M\right)})=
{}^t(c_{\ell_1}^{\left(Q;M^c\right)}(\tau^{\left(M\right)}),\ldots,
c_{\ell_{\lambda}}^{\left(Q;M^c\right)}(\tau^{\left(M\right)})).
\]

Note that if $M=\emptyset$, then
$c^{\left(Q;[n]\right)}(\tau^{\left(\emptyset\right)})=0$. We write
$c^{\left(Q;M^c\right)}(\tau^{\left(M\right)})=
c^{\left(M^c\right)}(\tau)$, and, similarly,
$\rho_{-,\tau^{\left(M\right)}}^{\left(Q;M^c\right)}=
\rho_{-,\tau}^{\left(M^c\right)}$, etc., when the context allows
that.

Since the vector $\rho_{-,\tau}^{\left(M^c\right)}\in\hbox{\ccc
R}^{n-m}$ depends linearly on $\tau^{\left(M\right)}$, the map
\begin{equation}
\psi_M\colon\hbox{\ccc R}^M\to \hbox{\ccc R}^n,\hbox{\
}\tau^{\left(M\right)}\mapsto
c^{\left(M^c\right)}(\tau^{\left(M\right)}),\label{1.1.35}
\end{equation}
is an injective homomorphism of linear spaces. We set
$\eff^{\left(Q;M^c\right)}=\psi_M(\hbox{\ccc R}^M)$ and note that
$\eff^{\left(Q;M^c\right)}$ is an $m$-dimensional subspace of
$\hbox{\ccc R}^n$. Below we use also the short notation
$\eff^{\left(M^c\right)}=\eff^{\left(Q;M^c\right)}$ when the matrix
$Q$ is given by default.

\begin{lemma}\label{1.1.40} Let $K$ and $M$ be proper subsets
of the set of indices $[n]$ with $K\subset M$. Let
$Q^{\left(K^c\right)}$ and $Q^{\left(M^c\right)}$ be invertible
submatrices of $Q$. The following two statements are equivalent:

{\rm (i)} One has $c^{\left(K^c\right)}(\tau)\in
h(\tau^{\left(M\right)})$.

{\rm (ii)} One has $c^{\left(K^c\right)}(\tau)=
c^{\left(M^c\right)}(\tau)$.

\end{lemma}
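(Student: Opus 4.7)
The implication (ii) $\Rightarrow$ (i) is essentially a tautology: by construction every coordinate of $c^{(M^c)}(\tau)$ indexed by $j\in M$ equals $\tau_j$, so $c^{(M^c)}(\tau)\in h(\tau^{(M)})$; hence if $c^{(K^c)}(\tau)=c^{(M^c)}(\tau)$ then $c^{(K^c)}(\tau)$ lies in $h(\tau^{(M)})$ as well. I would dispose of this direction in one sentence.

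The real content is (i) $\Rightarrow$ (ii), which I would approach by unpacking both $c^{(K^c)}(\tau)$ and $c^{(M^c)}(\tau)$ as the unique solutions of two linear systems and then checking that under (i) the former solves the latter system. Explicitly, the definition of $c^{(K^c)}(\tau)$ together with (\ref{1.1.25}) is equivalent to saying that $y:=c^{(K^c)}(\tau)$ satisfies
\[
y_j=\tau_j\ (j\in K),\qquad \sum_{j=1}^n q_{ij}y_j=0\ (i\in K^c),
\]
and similarly $c^{(M^c)}(\tau)$ is the unique vector $z$ with
\[
z_j=\tau_j\ (j\in M),\qquad \sum_{j=1}^n q_{ij}z_j=0\ (i\in M^c),
\]
uniqueness following from the invertibility of $Q^{(M^c)}$.

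Assuming (i) gives $y_j=\tau_j$ for every $j\in M$ (using also $K\subset M$ for the indices in $K$). From $K\subset M$ one has $M^c\subset K^c$, so the block of equations $\sum_j q_{ij}y_j=0$ for $i\in K^c$ automatically contains the sub-block for $i\in M^c$. Thus $y$ satisfies exactly the two conditions that characterize $c^{(M^c)}(\tau)$, and uniqueness forces $y=c^{(M^c)}(\tau)$, which is (ii).

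The step that requires the most care is not a computation but the bookkeeping: keeping straight that (i) combined with the definition of $c^{(K^c)}(\tau)$ promotes the ``matching'' condition from $K$ to $M$, while the inclusion $M^c\subset K^c$ goes the opposite way and lets one extract from the $K^c$-indexed equations precisely the $M^c$-indexed equations needed. Once this is laid out, the invertibility of $Q^{(M^c)}$ finishes the argument with no further work.
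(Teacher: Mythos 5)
Your proof is correct and follows essentially the same route as the paper: both directions rest on rewriting $c^{(K^c)}(\tau)$ and $c^{(M^c)}(\tau)$ as the unique solutions of the linear systems coming from (\ref{1.1.25}), observing that (i) promotes the matching condition from $K$ to $M$ while $M^c\subset K^c$ supplies the $M^c$-indexed equations, and invoking uniqueness from the invertibility of $Q^{(M^c)}$. Your version merely spells out the system characterization more explicitly than the paper does; no gap.
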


\begin{proof} We have $h(\tau^{\left(M\right)})\subset
h(\tau^{\left(K\right)})$ and let us assume $K\neq M$. It is enough
to prove that ${\rm (i)}$ implies ${\rm (ii)}$. Let
$c^{\left(K^c\right)}(\tau)\in h(\tau^{\left(M\right)})$. We remind
that the hyperplane $h^{\left(j\right)}$ has equation
$h^{\left(j\right)}\colon x_j=\tau_j$ for any $j\in M$. In
particular, for each $j\in K^c\setminus M^c=M\setminus K$ we obtain
$c_j^{\left(K^c\right)}(\tau)=\tau_j$. Therefore
$c^{\left(K^c\right)}(\tau)_{M^c}$ is a solution of the
equation~(\ref{1.1.25}). The uniqueness of this solution implies
$c^{\left(K^c\right)}(\tau)= c^{\left(M^c\right)}(\tau)$.

\end{proof}

\begin{corollary}\label{1.1.45}  One has
\[
\eff^{\left(K^c\right)}\cap
h(\tau^{\left(M\right)})\subset\eff^{\left(M^c\right)}.
\]

\end{corollary}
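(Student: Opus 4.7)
The plan is to unravel the definitions: an element of $\eff^{\left(K^c\right)}$ has the form $c^{\left(K^c\right)}(\sigma^{\left(K\right)})$ for some $\sigma^{\left(K\right)}\in\hbox{\ccc R}^K$, so the statement is really that the extra incidence condition of lying in $h(\tau^{\left(M\right)})$ forces $\sigma^{\left(K\right)}$ to be the restriction of $\tau^{\left(M\right)}$, after which Lemma~\ref{1.1.40} finishes the job.

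More precisely, I would start by picking an arbitrary $x\in\eff^{\left(K^c\right)}\cap h(\tau^{\left(M\right)})$ and, using the definition $\eff^{\left(K^c\right)}=\psi_K(\hbox{\ccc R}^K)$ together with~(\ref{1.1.35}), write $x=c^{\left(K^c\right)}(\sigma^{\left(K\right)})$ for some $\sigma^{\left(K\right)}\in\hbox{\ccc R}^K$. The first key step is to match the free parameters: by the very construction of $c^{\left(K^c\right)}$, its $j$-th component equals $\sigma_j$ for every $j\in K$, while the hypothesis $x\in h(\tau^{\left(M\right)})$ forces $x_j=\tau_j$ for every $j\in M$, and in particular for $j\in K\subset M$. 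Comparing these two equalities yields $\sigma_j=\tau_j$ for $j\in K$, so $\sigma^{\left(K\right)}=\tau^{\left(K\right)}$ and therefore $x=c^{\left(K^c\right)}(\tau^{\left(K\right)})$.

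At this point we are exactly in the setting of Lemma~\ref{1.1.40}: we have $c^{\left(K^c\right)}(\tau)\in h(\tau^{\left(M\right)})$ with both $Q^{\left(K^c\right)}$ and $Q^{\left(M^c\right)}$ invertible, so the lemma gives $c^{\left(K^c\right)}(\tau)=c^{\left(M^c\right)}(\tau^{\left(M\right)})$. But $c^{\left(M^c\right)}(\tau^{\left(M\right)})=\psi_M(\tau^{\left(M\right)})\in\psi_M(\hbox{\ccc R}^M)=\eff^{\left(M^c\right)}$, hence $x\in\eff^{\left(M^c\right)}$, which is the desired inclusion.

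I do not anticipate a real mathematical obstacle here: the corollary is essentially a reformulation of Lemma~\ref{1.1.40} in terms of the images $\eff^{\left(K^c\right)}$ and $\eff^{\left(M^c\right)}$. The only thing that requires a bit of care is the bookkeeping of the indexing sets, namely distinguishing between $\tau^{\left(M\right)}$, its restriction $\tau^{\left(K\right)}$, and the parameter $\sigma^{\left(K\right)}$ produced by membership in $\eff^{\left(K^c\right)}$; matching these three is precisely where the hypothesis $K\subset M$ is used.
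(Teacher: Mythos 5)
Your proof is correct and follows exactly the route the paper intends: the corollary is stated there as an immediate consequence of Lemma~\ref{1.1.40}, and your unravelling of the definitions (identifying the parameter $\sigma^{\left(K\right)}$ with $\tau^{\left(K\right)}$ via the components $c_j^{\left(K^c\right)}=\sigma_j$, $j\in K$, and the incidence $x_j=\tau_j$, $j\in M$) is precisely the bookkeeping the paper leaves implicit before invoking the lemma's implication ${\rm(i)}\Rightarrow{\rm(ii)}$. No gaps.
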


Now, let us fix all components of
$\tau^{\left(M\right)}\in\hbox{\ccc R}^M$, except $r=\tau_\ell$ for
some $\ell\in M$, so $\tau^{\left(M\right)}=
\tau^{\left(\{\ell\},M\setminus\{\ell\}\right)}(r)$. When we vary
$r\in\hbox{\ccc R}$, then
$\tau^{\left(\{\ell\},M\setminus\{\ell\}\right)}(r)$ describes a
straight line in $\hbox{\ccc R}^M$ and hence
$c^{\left(M^c\right)}(\tau^{\left(\{\ell\},M\setminus\{\ell\}\right)}(r))$
describes a straight line in $\hbox{\ccc R}^n$ which we denote by
$\eff_\ell^{\left(Q;M^c\right)}$. Its ray
$\{c^{\left(M^c\right)}(\tau^{\left(\{\ell\},M\setminus\{\ell\}\right)}(r))
\mid r\geq b\}$, $b\in\hbox{\ccc R}$, is denoted by
$\eff_{\ell^{b+}}^{\left(Q;M^c\right)}$.

Let us set
\[
\gamma_M^{\left(Q\right)}(\tau)= \alpha_{\left(Q;M\right)}(\tau)-
\alpha_{\left(Q;M^c\right)}(c_{i_1}^{\left(Q;M^c\right)}
(\tau),\ldots, c_{i_{n-m}}^{\left(Q;M^c\right)}(\tau)).
\]
Since $\alpha_{\left(Q;\emptyset\right)}(x)=0$ and
$c_1^{\left(Q;[n]\right)}(\tau)=\cdots=
c_n^{\left(Q;[n]\right)}(\tau)=0$, we obtain
$\gamma_\emptyset^{\left(Q\right)}(\tau)=0$. We write
$\gamma_M^{\left(Q\right)}(\tau)=\gamma_M(\tau)$ when the matrix $Q$
is known from the context.

It follows from Lemma~\ref{A.1.5}, {\rm (i)}, that $\gamma_M(\tau)$
is a quadratic form in $\tau^{\left(M\right)}$.

Let us move the origin of the coordinate system by the substitution
$x=z(\tau^{\left(M\right)})+c^{\left(M^c\right)}(\tau^{\left(M\right)})$.
Then the restrictions of the components of both
$x^{\left(M^c\right)}$ and
$z^{\left(M^c\right)}(\tau^{\left(M\right)})$ on
$h(\tau^{\left(M\right)})$ are coordinate functions in this
$(n-m)$-dimensional affine space.

Let $v(x)={}^t\!xQx$ be the quadratic form produced by the symmetric
nonzero $n\times n$-matrix $Q$. Thus, $Q_a\colon v(x)=a$ is a
quadric in $\hbox{\ccc R}^n$ for generic $a\in\hbox{\ccc R}$ and the
real variety $q_{a,\tau^{\left(M\right)}}=Q_a\cap
h(\tau^{\left(M\right)})$ is defined in $h(\tau^{\left(M\right)})$
by the equation
\begin{equation}
{}^tx^{\left(M^c\right)}Q^{\left(M^c\right)}x^{\left(M^c\right)}+
2{}^t\!\rho_{-,\tau}^{\left(M^c\right)}
x^{\left(M^c\right)}+\alpha_M(\tau)-a=0. \label{1.1.60}
\end{equation}

Let us set \begin{equation}
v^{\left(M^c\right)}(z(\tau^{\left(M\right)}))=
{}^tz^{\left(M^c\right)}(\tau^{\left(M\right)})
Q^{\left(M^c\right)}z^{\left(M^c\right)}(\tau^{\left(M\right)}).
\label{1.1.65}
\end{equation}
In case the principal submatrix $Q^{\left(M^c\right)}$ is
invertible, Lemma~\ref{A.1.10} implies that
$v(x)=v^{\left(M^c\right)}(z(\tau^{\left(M\right)}))+
\gamma_M(\tau^{\left(M\right)})$ on $h^{\left(M\right)}$, and in
terms of $z$-coordinates the equation~(\ref{1.1.60}) has the form
\begin{equation}
v^{\left(M^c\right)}(z(\tau^{\left(M\right)}))=
a-\gamma_M(\tau^{\left(M\right)}).\label{1.1.70}
\end{equation}

\subsection{Intersections of Ellipsoids and Affine Subspaces}

\label{1.5}

Let $v(x)={}^t\!xQx$ be a positive definite quadratic form produced
by the symmetric (positive definite) $n\times n$-matrix $Q$. This
being so, $Q_a\colon v(x)=a$ is an ellipsoid in $ \hbox{\ccc R}^n$
for $a>0$, $Q_0=\{0\}$, and $Q_a=\emptyset$ for $a<0$. In
particular, $Q^{\left(M^c\right)}$ is a principal, hence positive
definite, submatrice of $Q$. Thus, the quadratic form~(\ref{1.1.65})
is positive definite.

In accord with~(\ref{1.1.60}) and~(\ref{1.1.70}), we establish parts
{\rm (ii)}, {\rm (iii)}, and {\rm (iv)} of the next theorem. Part
{\rm (i)} is proved in Lemma~\ref{A.1.5}, {\rm (ii)}.

\begin{theorem}\label{1.5.20} Let the quadratic form $v(x)={}^t\!xQx$
be positive definite.

{\rm (i)} If $M\neq\emptyset$, then the quadratic form
$\gamma_M(\tau)$ is positive definite.

{\rm (ii)} If $a>\gamma_M(\tau)$, then $q_{a,\tau^{\left(M\right)}}$
is an ellipsoid in the $(n-m)$-dimensional vector space
$h(\tau^{\left(M\right)})$ with center $c^{\left(M^c\right)}(\tau)$
and $Q^{\left(M^c\right)}$-radius $\sqrt{a-\gamma_M(\tau)}$.

{\rm (iii)} If $a=\gamma_M(\tau)$, then
$q_{a,\tau^{\left(M\right)}}=\{c^{\left(M^c\right)}(\tau)\}$.

{\rm (iv)} If $a<\gamma_M(\tau)$, then the set
$q_{a,\tau^{\left(M\right)}}$ is empty.

\end{theorem}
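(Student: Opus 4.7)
The plan is to reduce everything to the key identity recorded just before the theorem, namely $v(x)=v^{(M^c)}(z(\tau^{(M)}))+\gamma_M(\tau^{(M)})$ on $h(\tau^{(M)})$, together with the positive definiteness of the principal submatrix $Q^{(M^c)}$. Part (i) is asserted to be handed to us by Lemma~\ref{A.1.5}(ii); I would only note in passing that its content is compatible with the specialization $x=c^{(M^c)}(\tau)$ of the identity above, which gives $\gamma_M(\tau)=v(c^{(M^c)}(\tau))$, a value of the positive definite form $v$ at a vector whose $M$-components are exactly $\tau^{(M)}$, hence nonzero whenever $\tau^{(M)}\neq 0$.

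For parts (ii)--(iv), I would first observe that since $Q$ is positive definite, every principal submatrix of $Q$ is positive definite; in particular $Q^{(M^c)}$ is, and therefore the quadratic form $v^{(M^c)}$ introduced in~(\ref{1.1.65}) is a positive definite form in the $z$-coordinates on the $(n{-}m)$-dimensional affine space $h(\tau^{(M)})$, which has $c^{(M^c)}(\tau)$ as origin. The defining equation of the trace $q_{a,\tau^{(M)}}=Q_a\cap h(\tau^{(M)})$ in these coordinates is~(\ref{1.1.70}):
\[
v^{(M^c)}(z(\tau^{(M)}))=a-\gamma_M(\tau^{(M)}).
\]
The three cases in the theorem correspond exactly to the three possible signs of the right-hand side $b:=a-\gamma_M(\tau)$.

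From here the argument is mechanical. If $b>0$, then $v^{(M^c)}(z)=b$ is, by definition, the $Q^{(M^c)}$-sphere of $Q^{(M^c)}$-radius $\sqrt{b}$ centered at $z=0$, i.e.\ at $x=c^{(M^c)}(\tau)$, which is precisely an ellipsoid in the sense introduced in~\ref{0.5}; this yields (ii). If $b=0$, positive definiteness of $v^{(M^c)}$ forces $z=0$, so $q_{a,\tau^{(M)}}$ reduces to the single point $c^{(M^c)}(\tau)$, giving (iii). If $b<0$, there is no solution since $v^{(M^c)}(z)\geq 0$, so $q_{a,\tau^{(M)}}=\emptyset$, giving (iv).

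The only real subtlety is ensuring that the change of variables $x=z(\tau^{(M)})+c^{(M^c)}(\tau^{(M)})$ truly lands $q_{a,\tau^{(M)}}$ inside $h(\tau^{(M)})$ and that~(\ref{1.1.70}) is the full defining equation there (no residual affine terms); this is exactly the content of Lemma~\ref{A.1.10} invoked in~\ref{1.1}, which kills the linear term in~(\ref{1.1.60}) via the definition of $c^{(M^c)}(\tau)$ as the solution of~(\ref{1.1.25}). The main obstacle, if one wanted to be completely self-contained, would therefore be auditing that lemma; given that we may quote it, the proof is essentially a one-line case analysis.
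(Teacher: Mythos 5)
Your proposal is correct and follows essentially the same route as the paper: the paper likewise obtains parts (ii)--(iv) directly from the equations~(\ref{1.1.60}) and~(\ref{1.1.70}) together with the positive definiteness of the principal submatrix $Q^{\left(M^c\right)}$, and delegates part (i) to Lemma~\ref{A.1.5}, (ii), whose argument (that $c^{\left(M^c\right)}(\tau)=0$ forces $\tau^{\left(M\right)}=0$) is exactly the observation you make in passing. Nothing further is needed.
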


\begin{remark}\label{1.5.25} {\rm We remind that ellipsoid in an
one-dimensional affine subspace is a set consisting of two points
and its center is the midpoint.

}

\end{remark}

\begin{remark}\label{1.5.40} {\rm  In accord with Lemma~\ref{2.1.10},
the affine subspace $h(\tau^{\left(M\right)})$ is tangential to the
ellipsoid $Q_a$, $a=\gamma_M(\tau)$, at the point
$x=c^{\left(M^c\right)}(\tau)$.

}

\end{remark}

\begin{remark}\label{1.5.45} {\rm  In view of the previous remark,
Lemma~\ref{1.1.40} has transparent geometrical meaning: If the
subspace $h(\tau^{\left(M\right)})$ of $h(\tau^{\left(K\right)})$
passes through the point $x=c^{\left(K^c\right)}(\tau)$, then
$h(\tau^{\left(M\right)})$ is also tangential to $Q_a$ at $x$.

}

\end{remark}

We obtain immediately the following corollary:

\begin{corollary}\label{1.5.50} {\rm (i)} For any
$x\in h(\tau^{\left(M\right)})$ one has $v(x)\geq \gamma_M(\tau)$
and an equality holds if and only if $x=c^{\left(M^c\right)}(\tau)$.

{\rm (ii)} The point $c^{\left(M^c\right)}(\tau)\in
h(\tau^{\left(M\right)})$ is the foot of $Q$-perpendicular from the
origin $O$ to the affine subspace $h(\tau^{\left(M\right)})$ and one
has
\[
\dist_Q(O,h(\tau^{\left(M\right)}))=
\|c^{\left(M^c\right)}(\tau)\|_Q= \sqrt{\gamma_M(\tau)}.
\]
\end{corollary}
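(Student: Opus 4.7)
The plan is to read off both assertions directly from the quadratic decomposition already established in the derivation of Theorem~\ref{1.5.20}. First I would recall that after the translation $x = z(\tau^{\left(M\right)}) + c^{\left(M^c\right)}(\tau)$, the identity
\[
v(x) = v^{\left(M^c\right)}(z(\tau^{\left(M\right)})) + \gamma_M(\tau)
\]
holds on $h(\tau^{\left(M\right)})$, with $v^{\left(M^c\right)}$ the positive definite quadratic form of matrix $Q^{\left(M^c\right)}$ (positive-definiteness inherited by the principal submatrix from $Q$). Since $v^{\left(M^c\right)}(z) \geq 0$ with equality iff $z = 0$, this immediately gives part {\rm (i)}: $v(x) \geq \gamma_M(\tau)$ for every $x \in h(\tau^{\left(M\right)})$, with equality precisely at $x = c^{\left(M^c\right)}(\tau)$.

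For part {\rm (ii)} I would use $\dist_Q(O,x)^2 = v(x)$ to recast part {\rm (i)} as the assertion that $c^{\left(M^c\right)}(\tau)$ is the unique $Q$-closest point of $h(\tau^{\left(M\right)})$ to $O$, with $Q$-distance $\sqrt{\gamma_M(\tau)}$. Since in any real inner-product space the closest point of an affine subspace to a given point coincides with the foot of the perpendicular dropped from that point, this identifies $c^{\left(M^c\right)}(\tau)$ as the foot of the $Q$-perpendicular from $O$ to $h(\tau^{\left(M\right)})$ and yields the distance formula.

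As a direct sanity check of the $Q$-perpendicularity I would note that the direction subspace of $h(\tau^{\left(M\right)})$ is spanned by the vectors $e_i$ with $i \in M^c$, and compute for each such $i$ that
\[
\langle c^{\left(M^c\right)}(\tau), e_i\rangle = \bigl(Q c^{\left(M^c\right)}(\tau)\bigr)_i = \sum_{k\in M} q_{ik}\tau_k + \sum_{k\in M^c} q_{ik} c_k^{\left(M^c\right)}(\tau) = 0,
\]
the last equality being the $i$-th row of the defining matrix equation~(\ref{1.1.25}). No serious obstacle is anticipated; the corollary is essentially a repackaging of the decomposition underlying Theorem~\ref{1.5.20}, with positive-definiteness of $Q^{\left(M^c\right)}$ supplying both the inequality and the uniqueness clause, and the standard nearest-point characterization of an orthogonal foot doing the rest.
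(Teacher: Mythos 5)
Your proposal is correct and follows essentially the same route as the paper, which treats the corollary as an immediate consequence of the decomposition $v(x)=v^{\left(M^c\right)}(z(\tau^{\left(M\right)}))+\gamma_M(\tau)$ underlying Theorem~\ref{1.5.20} (parts {\rm (ii)}--{\rm (iv)}), with positive definiteness of $Q^{\left(M^c\right)}$ giving the inequality and the uniqueness of the equality case. Your explicit verification of $Q$-perpendicularity via the rows of equation~(\ref{1.1.25}) is a correct and welcome supplement, but it does not change the substance of the argument.
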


\begin{corollary}\label{1.5.55} Let $K$ and $L$ be disjoint subsets
of $M$ with $K\cup L=M$. One has

{\rm (i)} If $a=\gamma_M(\tau^{\left(M\right)})$, then the trace
$q_{a,\tau^{\left(K\right)}}$ of the ellipsoid $Q_a$ on the affine
space $h(\tau^{\left(K\right)})$ is nonempty and the affine subspace
$h(\tau^{\left(M\right)})\subset h(\tau^{\left(K\right)})$ is
tangential to the ellipsoid $q_{a,\tau^{\left(K\right)}}$ at the
point $c^{\left(Q;M^c\right)}(\tau^{\left(M\right)})$.
\[
{\rm (ii)}\hbox{\ }
c^{\left(Q;M^c\right)}(\tau^{\left(M\right)})=c^{\left(Q;K^c\right)}
(\tau^{\left(K\right)})+
c^{\left(Q^{\left(K^c\right)};M^c\right)}(\tau^{\left(L\right)}-
c_L^{\left(Q;K^c\right)}(\tau^{\left(K\right)}))
\]
and
\[
{\rm (iii)}\hbox{\
}\gamma_M^{\left(Q\right)}(\tau^{\left(M\right)})=
\gamma_K^{\left(Q\right)}(\tau^{\left(K\right)})+
\gamma_L^{\left(Q^{\left(K^c\right)}\right)}(\tau^{\left(L\right)}-
c_L^{\left(Q;K^c\right)}(\tau^{\left(K\right)})).
\]

\end{corollary}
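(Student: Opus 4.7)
The plan is to reduce everything to iterated applications of Corollary~\ref{1.5.50}, working inside $h(\tau^{(K)})$ with its induced quadratic form via the coordinate shift $x = z(\tau^{(K)}) + c^{(Q;K^c)}(\tau^{(K)})$ introduced in Section~\ref{1.1}. The conceptual content is simply the two-stage orthogonal projection: drop the $Q$-perpendicular from $O$ onto $h(\tau^{(K)})$, then, inside $h(\tau^{(K)})$ (treated as an affine space with origin $c^{(Q;K^c)}(\tau^{(K)})$ and the $Q^{(K^c)}$-metric), drop the perpendicular onto $h(\tau^{(M)})$.

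I would first establish (ii). Since $K\subset M$, we have $h(\tau^{(M)})\subset h(\tau^{(K)})$, and by Lemma~\ref{A.1.10} the restriction of $v$ to $h(\tau^{(K)})$ equals $v^{(K^c)}(z(\tau^{(K)})) + \gamma_K^{(Q)}(\tau^{(K)})$. In the $z$-coordinates the subspace $h(\tau^{(M)})$ is cut out inside $h(\tau^{(K)})$ by the additional equations $z_j = \tau_j - c_j^{(Q;K^c)}(\tau^{(K)})$ for $j\in L$; these are the defining equations in $\hbox{\ccc R}^{K^c}$ (with matrix $Q^{(K^c)}$) that, in the formalism of Section~\ref{1.1}, correspond to the index set $L\subset K^c$ and the parameter vector $\tau^{(L)} - c_L^{(Q;K^c)}(\tau^{(K)})$. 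Applying Corollary~\ref{1.5.50}(ii) to the positive definite form $v^{(K^c)}$, the unique foot of the $Q^{(K^c)}$-perpendicular from the origin of $\hbox{\ccc R}^{K^c}$ to that subspace is $c^{(Q^{(K^c)};M^c)}(\tau^{(L)} - c_L^{(Q;K^c)}(\tau^{(K)}))$. Translating back to $x$-coordinates via the shift produces a candidate for the foot of the $Q$-perpendicular from $O$ to $h(\tau^{(M)})$ equal to the right-hand side of (ii). By Corollary~\ref{1.5.50}(ii) applied directly to $Q$ and $M$, that foot is also $c^{(Q;M^c)}(\tau^{(M)})$, and uniqueness of the perpendicular foot yields (ii).

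For (iii), evaluate $v$ at the common point $x = c^{(Q;M^c)}(\tau^{(M)})$. Corollary~\ref{1.5.50}(i) gives $v(x) = \gamma_M^{(Q)}(\tau^{(M)})$, while the decomposition on $h(\tau^{(K)})$ splits this same quantity as $v^{(K^c)}(z^{(K^c)}) + \gamma_K^{(Q)}(\tau^{(K)})$. A second use of Corollary~\ref{1.5.50}(i), now inside $\hbox{\ccc R}^{K^c}$ with the form $v^{(K^c)}$ and the subset $L$, identifies the first summand as $\gamma_L^{(Q^{(K^c)})}(\tau^{(L)} - c_L^{(Q;K^c)}(\tau^{(K)}))$, producing (iii).

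Finally for (i), identity (iii) together with positive definiteness (Theorem~\ref{1.5.20}(i) applied to $\gamma_L^{(Q^{(K^c)})}$) gives $a = \gamma_M^{(Q)}(\tau^{(M)}) \geq \gamma_K^{(Q)}(\tau^{(K)})$, so Theorem~\ref{1.5.20} asserts that $q_{a,\tau^{(K)}}$ is a (possibly degenerate) ellipsoid in $h(\tau^{(K)})$; the point $c^{(Q;M^c)}(\tau^{(M)}) \in h(\tau^{(M)})\subset h(\tau^{(K)})$ satisfies $v = a$, so it belongs to $q_{a,\tau^{(K)}}$, witnessing nonemptiness. Remark~\ref{1.5.40} then says that $h(\tau^{(M)})$ is tangent to $Q_a$ at $c^{(Q;M^c)}(\tau^{(M)})$, and since $h(\tau^{(M)})\subset h(\tau^{(K)})$ the tangent hyperplane to $q_{a,\tau^{(K)}}$ inside $h(\tau^{(K)})$ (obtained by intersecting the tangent hyperplane to $Q_a$ with $h(\tau^{(K)})$) contains $h(\tau^{(M)})$, which is the claimed tangency. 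The main obstacle I expect is purely notational: keeping straight the two ambient spaces ($\hbox{\ccc R}^n$ with matrix $Q$ versus $\hbox{\ccc R}^{K^c}$ with matrix $Q^{(K^c)}$) and the reindexing that turns the role of ``$M$'' in Corollary~\ref{1.5.50} into the role of ``$L \subset K^c$'' in its second invocation; once the dictionary is fixed, each step is immediate from the cited earlier results.
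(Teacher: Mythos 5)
Your proof is correct, and at its core it is the same two-stage reduction the paper uses: pass to the $z$-coordinates on $h(\tau^{\left(K\right)})$ via Lemma~\ref{A.1.10}, view $h(\tau^{\left(M\right)})$ as the subspace of $\hbox{\ccc R}^{K^c}$ cut out by the index set $L\subset K^c$ with parameter $\tau^{\left(L\right)}-c_L^{\left(Q;K^c\right)}(\tau^{\left(K\right)})$, and apply the general machinery to $Q^{\left(K^c\right)}$. The difference is in the order and in which earlier result carries the weight. You prove (ii) and (iii) first, through the variational (perpendicular-foot/minimizer) characterization of Corollary~\ref{1.5.50} applied twice (once for $Q,M$ in $\hbox{\ccc R}^n$, once for $Q^{\left(K^c\right)},L$ in $\hbox{\ccc R}^{K^c}$), and only then deduce (i). The paper proves (i) first: from Theorem~\ref{1.5.20} (ii)--(iv) it gets the single-point intersection $q_{a,\tau^{\left(K\right)}}\cap h(\tau^{\left(M\right)})=Q_a\cap h(\tau^{\left(M\right)})=\{c^{\left(Q;M^c\right)}(\tau^{\left(M\right)})\}$, identifies $q_{a,\tau^{\left(K\right)}}$ with $Q^{\left(K^c\right)}_{a-\gamma_K(\tau^{\left(K\right)})}$ in the $z$-coordinates, reads the tangency directly off the one-point intersection (with Remark~\ref{2.1.5} covering the degenerate case $a=\gamma_K(\tau^{\left(K\right)})$, where the trace is a single point), and then (ii) and (iii) drop out of the same computation. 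Two small remarks on your version of (i): your tangency argument invokes the fact that the tangent hyperplane of the trace $q_{a,\tau^{\left(K\right)}}$ is the trace of the tangent hyperplane of $Q_a$ --- true, but nowhere proved in the paper, whereas the one-point-intersection route needs nothing beyond Theorem~\ref{1.5.20}(iii); and you should say a word about the degenerate case $a=\gamma_K(\tau^{\left(K\right)})$ (point ``ellipsoid'', Remark~\ref{2.1.5}) and about the trivial cases where $K$, $L$, or $M$ is empty, which the paper dispatches in one line. Your trade-off is a more conceptual ``theorem of three perpendiculars'' picture (cf.\ Remarks~\ref{1.5.65} and~\ref{1.5.70}); the paper's is a shorter, entirely internal derivation from the trichotomy.
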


\begin{proof} Both assertions hold when one of the sets $M$,
$K$, or $L$, is empty.

{\rm (i)} The equalities
\[
q_{a,\tau^{\left(M\right)}}=q_{a,\tau^{\left(K\right)}}\cap
h(\tau^{\left(L\right)})=q_{a,\tau^{\left(K\right)}}\cap
h(\tau^{\left(M\right)})=Q_a\cap h(\tau^{\left(M\right)})
\]
and Theorem~\ref{1.5.20}, {\rm (ii)} -- {\rm (iv)}, yield that under
the condition $a=\gamma_M(\tau^{\left(M\right)})$ we have
\begin{equation}
q_{a,\tau^{\left(K\right)}}\cap h(\tau^{\left(M\right)})=
\{c^{\left(Q;M^c\right)}(\tau^{\left(M\right)})\}.\label{1.5.60}
\end{equation}

In particular, $a\geq\gamma_K(\tau^{\left(K\right)})$ and in this
case $q_{a,\tau^{\left(K\right)}}$ is an ellipsoid in the vector
space $h(\tau^{\left(K\right)})$ endowed with coordinate functions
$(z_s^{\left(K^c\right)}(\tau^{\left(K\right)}))_{s\in K^c}$. The
point $\{c^{\left(Q;K^c\right)}(\tau^{\left(K\right)})\}$ is both
the origin of the coordinates and the center of the ellipsoid
$q_{a,\tau^{\left(K\right)}}$ which has equation
\[
{}^tz^{\left(K^c\right)}(\tau^{\left(K\right)})
Q^{\left(K^c\right)}z^{\left(K^c\right)}(\tau^{\left(K\right)})=
a-\gamma_K(\tau^{\left(K\right)}).
\]
Therefore we have
\[
q_{a,\tau^{\left(K\right)}}=
Q_{a-\gamma_K(\tau^{\left(K\right)})}^{\left(K^c\right)}.
\]
Because of~(\ref{1.5.60}), the trace $h(\tau^{\left(M\right)})$ of
$h(\tau^{\left(L\right)})$ on $h(\tau^{\left(K\right)})$ is
tangential to $q_{a,\tau^{\left(K\right)}}$ at the point
$c^{\left(Q;M^c\right)}(\tau^{\left(M\right)})$ (Note that in case
$q_{a,\tau^{\left(K\right)}}=
\{c^{\left(Q;M^c\right)}(\tau^{\left(M\right)})\}$ we have
$c^{\left(Q;M^c\right)}(\tau^{\left(M\right)})=
c^{\left(Q;K^c\right)}(\tau^{\left(K\right)})$ and
$h(\tau^{\left(M\right)})$ is also tangential to
$q_{a,\tau^{\left(K\right)}}$ at the point
$c^{\left(Q;M^c\right)}(\tau^{\left(M\right)})$ --- see
Remark~\ref{2.1.5}).

{\rm (ii)} The affine subspace $h(\tau^{\left(M\right)})$ is defined
in $h(\tau^{\left(K\right)})$ by the equations
$z_s^{\left(K^c\right)}(\tau^{\left(K\right)})=
\tau_s-c_s^{\left(Q;K^c\right)}(\tau^{\left(K\right)})$, $s\in L$
(we have $L\subset K^c$). Hence the difference
$c^{\left(Q;M^c\right)}(\tau^{\left(M\right)})-
c^{\left(Q;K^c\right)}(\tau^{\left(K\right)})$ of points in the
affine subspace $h(\tau^{\left(K\right)})\subset\hbox{\ccc R}^n$
coincides with the vector
$c^{\left(Q^{\left(K^c\right)};M^c\right)}(\tau^{\left(L\right)}-
c_L^{\left(Q;K^c\right)}(\tau^{\left(K\right)}))$ and we have
obtained part {\rm (ii)}. The equalities
$a-\gamma_K(\tau^{\left(K\right)})=
\gamma_L^{\left(Q^{\left(K^c\right)}\right)}(\tau^{\left(L\right)}-
c_L^{\left(Q;K^c\right)}(\tau^{\left(K\right)}))$ and
$a=\gamma_M(\tau^{\left(M\right)}$ yield assertion {\rm (iii)}.

\end{proof}

\begin{remark}\label{1.5.65} {\rm Since the vector
$c^{\left(Q^{\left(K^c\right)}\right)}(\tau^{\left(K\right)})$ is
$Q$-perpendicular to the affine subspace $h(\tau^{\left(K\right)})$
and since the vector
$c^{\left(Q^{\left(K^c\right)};M^c\right)}(\tau^{\left(L\right)}-
c_L^{\left(Q;K^c\right)}(\tau^{\left(K\right)}))$ lies in this
subspace, part {\rm (ii)} of the above corollary is Pythagorean
theorem.

}

\end{remark}

\begin{remark}\label{1.5.70} {\rm It follows from Theorem of three
perpendiculars that the vector
$c^{\left(Q^{\left(K^c\right)};M^c\right)}(\tau^{\left(L\right)}-
c_L^{\left(Q;K^c\right)}(\tau^{\left(K\right)}))$ is
$Q$-perpendicular to the affine subspace $h(\tau^{\left(M\right)})$.

}

\end{remark}

\subsection{ A Bundle}

\label{1.10}

Let us consider the $(m+1)$-dimensional space $\hbox{\ccc
R}\times\hbox{\ccc R}^M$ with generic vector
${}^t(a,\tau^{\left(M\right)})$, endowed with standard topology and
let $\hat{\gamma}_M=\{{}^t(a,\tau^{\left(M\right)})\in\hbox{\ccc
R}\times\hbox{\ccc R}^M\mid a\geq\gamma_M(\tau)\}$. The set
$\hat{\gamma}_M$ is the closed region in $\hbox{\ccc
R}\times\hbox{\ccc R}^M$, which consists of all points above the
graph $\Gamma_M$ of the quadratic function $a=\gamma_M(\tau)$ when
$M\neq\emptyset$ and $\hat{\gamma}_\emptyset=[0,\infty)\times\{0\}$.
In all cases $\pr_a(\hat{\gamma}_M)=[0,\infty)$. The set $\Gamma_M$
is an algebraic variety (hence a closed set) in $\hbox{\ccc
R}\times\hbox{\ccc R}^M$ and the difference
$\tilde{\gamma}_M=\hat{\gamma}_M\backslash\Gamma_M$ is an open set,
both being nonempty.

Let $\gamma_M(\tau)=
{}^t\tau^{\left(M\right)}R\tau^{\left(M\right)}$, where $R$ is a
symmetric $M\times M$-matrix. In accord with Theorem~\ref{1.5.20},
{\rm (i)}, in case $M\neq\emptyset$, the matrix $R$ is positive
definite. If $M=\emptyset$, then $R$ is the empty matrix. Given
$a\geq 0$, we set $R_a=\{\tau^{\left(M\right)}\in\hbox{\ccc R}^M\mid
\gamma_M(\tau^{\left(M\right)})=a\}$ and note that $R_a$ is an
ellipsoid in $\hbox{\ccc R}^M$. Any level set
$\Gamma_{a,M}=\{{}^t(a,\tau^{\left(M\right)})\in\hbox{\ccc
R}\times\hbox{\ccc R}^M\mid a=\gamma_M(\tau)\}$, $a>0$, is
isomorphic to the ellipsoid $R_a$ in $\hbox{\ccc R}^M$, and
$\Gamma_{0,M}=\{(0,0)\}$. Given $a\geq 0$, let us denote
$\eff^{\left(a;M^c\right)}=\{x\in \hbox{\ccc R}^n\mid x=
c^{\left(M^c\right)}(\tau),\hbox{\ }
{}^t(a,\tau^{\left(M\right)})\in\Gamma_{a,M}\}$. We define a
morphism of real algebraic varieties by the rule
\[
\varphi_M\colon \hbox{\ccc R}^n\to\hbox{\ccc R}\times\hbox{\ccc
R}^M,\hbox{\ } x\mapsto {}^t(v(x),x^{\left(M\right)}).
\]
Theorem~\ref{1.5.20} yields $\varphi_M(\hbox{\ccc
R}^n)=\hat{\gamma}_M$, we set
$\Phi_M=\varphi_M^{-1}(\hat{\gamma}_M)$, and denote the restriction
of $\varphi_M$ on $\Phi_M$ by the same letter. Since
$\varphi_M^{-1}({}^t(a,\tau^{\left(M\right)}))=
q_{a,\tau^{\left(M\right)}}$, we establish the following:

\begin{theorem}\label{1.10.10} Let $\xi=(\Phi_M,
\varphi_M,\hat{\gamma}_M)$ be the bundle defined by the map
$\varphi_M$.

{\rm (i)} The restriction $\xi_{\mid\tilde{\gamma}_M}$ is a
fibration with fibres
$\varphi_M^{-1}({}^t(a,\tau^{\left(M\right)}))=
q_{a,\tau^{\left(M\right)}}$,
${}^t(a,\tau^{\left(M\right)})\in\tilde{\gamma}_M$, which are
ellipsoids in $\hbox{\ccc R}^{n-m}$ with centers
$c^{\left(M^c\right)}(\tau)$.

{\rm (ii)} The restriction $\xi_{\mid\Gamma_M}$ is an isomorphism of
real algebraic $m$-dimensional varieties with inverse isomorphism
$\Gamma_M\to\eff^{\left(M^c\right)}$,
${}^t(a,\tau^{\left(M\right)})\mapsto c^{\left(M^c\right)}(\tau)$,
which maps any level set $\Gamma_{a,M}$ onto
$\eff^{\left(a;M^c\right)}$.

\end{theorem}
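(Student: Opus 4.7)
The plan is to reduce both parts to Theorem~\ref{1.5.20} combined with routine bookkeeping about the polynomial maps involved. The map $\varphi_M$ is a morphism of real algebraic varieties, because its coordinates are polynomials (one quadratic in the first slot, $m$ linear in the remaining slots), and by construction its fibre over ${}^t(a,\tau^{\left(M\right)})$ equals $q_{a,\tau^{\left(M\right)}}=Q_a\cap h(\tau^{\left(M\right)})$. Theorem~\ref{1.5.20} together with the already noted equality $\varphi_M(\hbox{\ccc R}^n)=\hat{\gamma}_M$ then supplies all the geometric content of the fibres of $\xi$.

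For part {\rm (i)}, Theorem~\ref{1.5.20}, {\rm (ii)}, already yields that every fibre over a point of $\tilde{\gamma}_M$ is an ellipsoid with centre $c^{\left(M^c\right)}(\tau)$ and $Q^{\left(M^c\right)}$-radius $\sqrt{a-\gamma_M(\tau)}$; what is missing is the assertion that this is a fibration. To supply it, I would write down an explicit global trivialization using the $z$-coordinates of~\ref{1.1}. Let $\Sigma=\{w\in\hbox{\ccc R}^{M^c}\mid {}^twQ^{\left(M^c\right)}w=1\}$ be the unit $Q^{\left(M^c\right)}$-ellipsoid, and define
\[
\Psi\colon \tilde{\gamma}_M\times\Sigma\to \varphi_M^{-1}(\tilde{\gamma}_M),\quad ({}^t(a,\tau^{\left(M\right)}),w)\mapsto c^{\left(M^c\right)}(\tau)+\sqrt{a-\gamma_M(\tau)}\,\tilde{w},
\]
where $\tilde{w}\in\hbox{\ccc R}^n$ has $M^c$-components $w$ and $M$-components zero. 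A short computation using~(\ref{1.1.70}) shows $\Psi$ lands in $\varphi_M^{-1}(\tilde{\gamma}_M)$ and identifies the family $\{q_{a,\tau^{\left(M\right)}}\}$ fibrewise with the trivial family having fibre $\Sigma$; the inverse $x\mapsto({}^t(v(x),x^{\left(M\right)}),\,(a-\gamma_M(\tau))^{-1/2}(x^{\left(M^c\right)}-c^{\left(M^c\right)}(\tau)^{\left(M^c\right)}))$ is continuous because $a-\gamma_M(\tau)>0$ throughout $\tilde{\gamma}_M$.

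For part {\rm (ii)}, Corollary~\ref{1.5.50}, {\rm (i)}, identifies $\varphi_M^{-1}(\Gamma_M)$ with $\eff^{\left(M^c\right)}$: a point $x$ satisfies $v(x)=\gamma_M(x^{\left(M\right)})$ precisely when $x=c^{\left(M^c\right)}(x^{\left(M\right)})$. Restricted to $\eff^{\left(M^c\right)}$, the map $\varphi_M$ reads $x\mapsto {}^t(\gamma_M(x^{\left(M\right)}),x^{\left(M\right)})$, while the candidate inverse ${}^t(a,\tau^{\left(M\right)})\mapsto c^{\left(M^c\right)}(\tau)$ is linear in $\tau^{\left(M\right)}$ by the injective homomorphism~(\ref{1.1.35}), hence itself a morphism of real algebraic varieties. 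Direct substitution shows both compositions are the identity, and the level-set statement $\Gamma_{a,M}\to\eff^{\left(a;M^c\right)}$ is immediate from the definition of $\eff^{\left(a;M^c\right)}$. The main obstacle is really the trivialization in {\rm (i)}: once the $z$-coordinate picture~(\ref{1.1.70}) is recognised as parametrising every fibre by a single reference ellipsoid via scaling, everything else is bookkeeping on material already in place.
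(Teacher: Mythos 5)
Your proposal is correct and takes essentially the same route as the paper: the paper obtains the theorem directly from Theorem~\ref{1.5.20} together with the fibre identification $\varphi_M^{-1}({}^t(a,\tau^{\left(M\right)}))=q_{a,\tau^{\left(M\right)}}$ and the equality $\varphi_M(\hbox{\ccc R}^n)=\hat{\gamma}_M$, which is exactly your reduction. The explicit global trivialization $\Psi$ over $\tilde{\gamma}_M$ (via the $z$-coordinates and scaling by $\sqrt{a-\gamma_M(\tau)}$) and the verification that ${}^t(a,\tau^{\left(M\right)})\mapsto c^{\left(M^c\right)}(\tau)$ is a two-sided inverse morphism in part {\rm (ii)} are details the paper leaves implicit, and they check out.
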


\begin{corollary} \label{1.10.15} The set $\eff^{\left(a;M^c\right)}$
is a real algebraic subvariety of $Q_a$, which is isomorphic via
$\xi\mid\Gamma_M$ to the ellipsoid $\Gamma_{a,M}$.

\end{corollary}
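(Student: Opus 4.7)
The plan is to deduce the corollary directly from Theorem~\ref{1.10.10}, (ii), which already furnishes an isomorphism of real algebraic varieties $\xi\mid\Gamma_M$ with inverse $(a,\tau^{\left(M\right)})\mapsto c^{\left(M^c\right)}(\tau)$, and which is stated to carry the level set $\Gamma_{a,M}$ onto $\eff^{\left(a;M^c\right)}$. Restricting this isomorphism to the subvariety $\Gamma_{a,M}\subset\Gamma_M$ --- cut out inside $\Gamma_M$ by the additional polynomial equation $\gamma_M(\tau)=a$, and, for $a>0$, isomorphic via the projection $\pr_{\tau^{\left(M\right)}}$ to the ellipsoid $R_a\subset\hbox{\ccc R}^M$ as noted just before Theorem~\ref{1.10.10} --- yields an isomorphism of real algebraic varieties $\Gamma_{a,M}\to\eff^{\left(a;M^c\right)}$.

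It then remains to verify that $\eff^{\left(a;M^c\right)}\subset Q_a$, so that $\eff^{\left(a;M^c\right)}$ is a subvariety of $Q_a$ and not merely of $\hbox{\ccc R}^n$. For any $x\in\eff^{\left(a;M^c\right)}$, by definition $x=c^{\left(M^c\right)}(\tau)$ for some $\tau^{\left(M\right)}$ with $\gamma_M(\tau)=a$; by Theorem~\ref{1.5.20}, (iii), or equivalently the equality case of Corollary~\ref{1.5.50}, (i), we have $v(x)=\gamma_M(\tau)=a$, hence $x\in Q_a$. This simultaneously proves the inclusion $\eff^{\left(a;M^c\right)}\subset Q_a$ and exhibits $\eff^{\left(a;M^c\right)}$ as the image of the real algebraic variety $\Gamma_{a,M}$ under the algebraic morphism in question.

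Because Theorem~\ref{1.10.10} already encapsulates all of the algebraic--geometric content, there is no serious obstacle here; the only care needed is in recognizing $\Gamma_{a,M}$ as a legitimate algebraic subvariety of $\Gamma_M$ (so that the restricted morphism still qualifies as an isomorphism of real algebraic varieties onto its image) and in invoking Theorem~\ref{1.5.20}, (iii) to upgrade membership in $\eff^{\left(M^c\right)}$ to membership in $Q_a$ at the prescribed level $a$.
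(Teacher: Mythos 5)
Your proposal is correct and follows exactly the route the paper intends: the corollary is stated as an immediate consequence of Theorem~\ref{1.10.10}, (ii), and your restriction of the isomorphism $\xi\mid\Gamma_M$ to the level set $\Gamma_{a,M}$, together with the identity $v(c^{\left(M^c\right)}(\tau))=\gamma_M(\tau)=a$ (Theorem~\ref{1.5.20}, (iii), or Lemma~\ref{A.1.5}, (i)) giving $\eff^{\left(a;M^c\right)}\subset Q_a$, is precisely the omitted verification.
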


Taking into account Remark~\ref{1.5.40}, we obtain immediately the
following:

\begin{corollary} \label{1.10.20} The family
$\{h(\tau^{\left(M\right)})\mid\tau^{\left(M\right)}\in\Gamma_{a,M}\}$
consists of all $(n-m)$-dimensional affine spaces in $\hbox{\ccc
R}^n$, which are both orthogonal to the $m$-dimensional vector
subspace generated by the vectors $e_j$, $j\in M$, and tangential to
the ellipsoid $Q_a$.

\end{corollary}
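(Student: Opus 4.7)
The plan is to read off the conclusion directly from Theorem~\ref{1.5.20} combined with Remark~\ref{1.5.40}, once the two defining properties (orthogonality to $\span(e_j:j\in M)$ and tangency to $Q_a$) are separated.

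First I would recall from the opening paragraph of Section~\ref{1.1} that the unparametrized family $\{h(\tau^{\left(M\right)})\mid\tau^{\left(M\right)}\in\hbox{\ccc R}^M\}$ already exhausts the collection of $(n-m)$-dimensional affine subspaces of $\hbox{\ccc R}^n$ which are orthogonal to the $m$-dimensional subspace $\span(e_j:j\in M)$. Hence the task reduces to isolating, within this family, exactly those members that are tangential to the ellipsoid $Q_a$.

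Next I would invoke the trichotomy of Theorem~\ref{1.5.20}, parts (ii)--(iv), applied to a fixed $a>0$ and a varying $\tau^{\left(M\right)}$. If $a>\gamma_M(\tau)$, then $q_{a,\tau^{\left(M\right)}}$ is a genuine $(n-m-1)$-dimensional ellipsoid, so $h(\tau^{\left(M\right)})$ is a secant and not a tangent subspace. If $a<\gamma_M(\tau)$, the trace is empty, so again there is no tangency. Only in the critical case $a=\gamma_M(\tau)$ does one obtain $q_{a,\tau^{\left(M\right)}}=\{c^{\left(M^c\right)}(\tau)\}$, and Remark~\ref{1.5.40} (which rests on the forward reference to Lemma~\ref{2.1.10}) records that in this situation $h(\tau^{\left(M\right)})$ is tangential to $Q_a$ at $c^{\left(M^c\right)}(\tau)$.

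Combining these two observations, the members of the family that are tangential to $Q_a$ correspond precisely to the parameters $\tau^{\left(M\right)}$ with $\gamma_M(\tau)=a$, i.e.\ to the level set $\Gamma_{a,M}$ as defined at the start of Section~\ref{1.10}. This delivers the asserted description of the family. The only delicate point is the legitimacy of identifying tangency with the single-point intersection in the case $a=\gamma_M(\tau)$; this is not an obstacle here since it has already been addressed in Remark~\ref{1.5.40} and is also consistent with the strict convexity of $Q_{\leq a}$ noted in Section~\ref{0.5}, which rules out any other mode of intersection of an affine subspace with the ellipsoid $Q_a$.
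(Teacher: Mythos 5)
Your proof is correct and follows essentially the same route as the paper, which obtains the corollary ``immediately'' from Remark~\ref{1.5.40} together with the trichotomy of Theorem~\ref{1.5.20} and the observation in Section~\ref{1.1} that the family $\{h(\tau^{\left(M\right)})\}$ exhausts the $(n-m)$-dimensional subspaces orthogonal to the span of $e_j$, $j\in M$. You merely spell out the implicit steps (non-tangency in the cases $a\neq\gamma_M(\tau)$, for which Lemma~\ref{2.1.20}, {\rm (ii)}, or strict convexity suffices), which is consistent with the paper's reasoning.
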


\subsection{ A Shadow}

\label{1.15}

Let us denote by $\zeta_M$ the restriction of the second projection
$\pr_2\colon\hbox{\ccc R}\times\hbox{\ccc R}^M\to\hbox{\ccc R}^M$ on
$\hat{\gamma}_M$. The composition $\phi_M=\zeta_M\circ\varphi_M$ is
the restriction on $\Phi_M$ of the projection of $\hbox{\ccc R}^n$
parallel to the subspace $W$ defined by $x^{\left(M\right)}=0$:
$\phi_M\colon \hbox{\ccc R}^n\to W^\perp$,
$\phi_M(x)=x^{\left(M\right)}$, and, moreover,
$\phi_M^{-1}(\tau^{\left(M\right)})=h(\tau^{\left(M\right)})$. Since
the set $\eff^{\left(a;M^c\right)}\subset Q_a$ is mapped via
$\phi_M$ onto the ellipsoid $R_a$ in $\hbox{\ccc R}^M$ and since the
internal points of $Q_a$ are mapped onto the internal points of
$R_a$, we can formulate the result from Corollary~\ref{1.10.20} as
solution of a shadow problem:

\begin{proposition} \label{1.10.25} All $(n-m)$-dimensional
affine spaces in $\hbox{\ccc R}^n$ with common direction vector
subspace $W$, which are also tangential to an ellipsoid $Q_a$ in
$\hbox{\ccc R}^n$, intersect the orthogonal complement $W^\perp$ at
the points of an ellipsoid $R_a$ in $W^\perp\simeq\hbox{\ccc R}^M$.
All affine spaces in $\hbox{\ccc R}^n$ which have nonempty
intersection with the interior of $Q_a$ and are parallel to $W$
intersect $W^\perp$ at the internal points of $R_a$.

\end{proposition}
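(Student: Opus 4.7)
The plan is to read off both claims from the bundle $\xi$ of Theorem~\ref{1.10.10} and its corollaries, after unwinding the identification $W^\perp\simeq\hbox{\ccc R}^M$ in an orderly way.

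First I would fix coordinates. The subspace $W=\{x\in\hbox{\ccc R}^n\mid x^{\left(M\right)}=0\}$ is the linear span of $(e_i)_{i\in M^c}$, so $W^\perp$ is the linear span of $(e_j)_{j\in M}$, canonically identified with $\hbox{\ccc R}^M$ by $\sum_{j\in M}\tau_j e_j\leftrightarrow \tau^{\left(M\right)}$. Under this identification, the projection $\phi_M=\zeta_M\circ\varphi_M\colon\hbox{\ccc R}^n\to W^\perp$ sends $x$ to $x^{\left(M\right)}$, and its fibre $\phi_M^{-1}(\tau^{\left(M\right)})=h(\tau^{\left(M\right)})$ meets $W^\perp$ in the single point $\tau^{\left(M\right)}$: indeed, a vector lying in both $h(\tau^{\left(M\right)})$ and $W^\perp$ satisfies $x_j=\tau_j$ for $j\in M$ and $x_i=0$ for $i\in M^c$.

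For the first assertion, I would invoke Corollary~\ref{1.10.20}: the $(n-m)$-dimensional affine spaces in $\hbox{\ccc R}^n$ with common direction $W$ that are tangential to $Q_a$ are precisely the $h(\tau^{\left(M\right)})$ with $\tau^{\left(M\right)}\in\Gamma_{a,M}$, that is, with $\gamma_M(\tau)=a$. By the identification above, the set of their intersection points with $W^\perp$ is exactly $\{\tau^{\left(M\right)}\in\hbox{\ccc R}^M\mid\gamma_M(\tau)=a\}=R_a$, which is an ellipsoid in $W^\perp$ by Theorem~\ref{1.5.20},~{\rm (i)}. For the second assertion, any affine space parallel to $W$ has the form $h(\tau^{\left(M\right)})$, and Corollary~\ref{1.5.50},~{\rm (i)}, gives $\min_{x\in h(\tau^{\left(M\right)})}v(x)=\gamma_M(\tau)$. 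Hence $h(\tau^{\left(M\right)})$ meets the interior $Q_{<a}$ of $Q_a$ if and only if $\gamma_M(\tau)<a$, which is precisely the condition that $\tau^{\left(M\right)}$ be an internal point of the ellipsoid $R_a$.

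The main, and essentially only, obstacle is administrative rather than substantive: one must keep straight the dual role of the vector $\tau^{\left(M\right)}$, as a parameter indexing the fibre $h(\tau^{\left(M\right)})$ of $\phi_M$ on the one hand, and as the point in $W^\perp$ at which that fibre crosses $W^\perp$ on the other. Once this bookkeeping is in place, the proposition follows at once by quoting Corollary~\ref{1.10.20} and Corollary~\ref{1.5.50}, and no further calculation with the matrix $Q$ or the form $v$ is necessary.
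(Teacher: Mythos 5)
Your proof is correct and follows essentially the paper's own route: the paper obtains the proposition by reformulating Corollary~\ref{1.10.20} through the projection $\phi_M$ onto $W^\perp\simeq\hbox{\ccc R}^M$ (fibres $\phi_M^{-1}(\tau^{\left(M\right)})=h(\tau^{\left(M\right)})$), with the interior statement resting on exactly the fact you cite from Corollary~\ref{1.5.50}, {\rm (i)}, namely that the minimum of $v$ on $h(\tau^{\left(M\right)})$ equals $\gamma_M(\tau)$. Your explicit bookkeeping of $\tau^{\left(M\right)}$ as both the fibre parameter and the point where the fibre meets $W^\perp$ merely spells out what the paper asserts when it says that $\eff^{\left(a;M^c\right)}$ is mapped onto $R_a$ and internal points of $Q_a$ onto internal points of $R_a$.
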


\section{Ellipsoids and Hyperplanes}

\label{2}

\subsection{Ellipsoids and their Tangent Spaces}

\label{2.1}

Let $v(x)={}^t\! xQx$ be a positive definite quadratic form. The
equation of the tangent space $\theta_{x_0}$ of the ellipsoid
$Q_a\colon v(x)=a$, $a>0$, at the point $x_0\in Q_a$ is
\[
\theta_{x_0}(x)=a,
\]
where $\theta_{x_0}(x)={}^t\!x_0Qx$. For all $x\in Q_a$ we have
$x\neq 0$ and since the matrix $Q$ has rank $n$, we obtain $Qx_0\neq
0$. In particular, $\theta_{x_0}$ is a hyperplane and $Q_a$ is a
smooth hypersurface in $\hbox{\ccc R}^n$.

\begin{remark}\label{2.1.5} The tangent space of the
"ellipsoid" $Q_0=\{O\}$ at its only point $x_0=O$ is $\hbox{\ccc
R}^n$. In particular, any linear subspace of $\hbox{\ccc R}^n$ is
tangential to $Q_0$.

\end{remark}

Let $a>0$ and let us fix a point $x_0\in Q_a$. For any vector $u\in
S^{n-1}$ we denote for short by $L_u$ the line $\{z\in \hbox{\ccc
R}^n\mid z=x_0+tu,\hbox{\ }t\in\hbox{\ccc R}\}$.

\begin{lemma}\label{2.1.10} One has
\[
L_u\cap Q_{\leq a}=\{x_0+tu\mid 0\leq t\leq
-2\frac{\theta_{x_0}(u)}{v(u)}\},\hbox{\ }L_u\cap Q_a=\{x_0,
x_0-2\frac{\theta_{x_0}(u)}{v(u)}u\}.
\]

\end{lemma}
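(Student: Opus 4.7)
The plan is to reduce both set-theoretic assertions to solving a single scalar quadratic in the parameter $t$. Writing $x = x_0 + tu$ and expanding by bilinearity of the symmetric form attached to $Q$, together with $v(x_0) = a$ and ${}^tx_0 Q u = \theta_{x_0}(u)$, I obtain
\[
v(x_0 + tu) = v(x_0) + 2t\,{}^tx_0 Q u + t^2 v(u) = a + 2t\,\theta_{x_0}(u) + t^2 v(u).
\]

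To describe $L_u \cap Q_a$, I set the above expression equal to $a$, arriving at the factored equation $t\bigl(t\, v(u) + 2\theta_{x_0}(u)\bigr) = 0$. Because $v$ is positive definite and $u \in S^{n-1}$ is nonzero, one has $v(u) > 0$, so the quadratic is nondegenerate and its two roots are $t = 0$ and $t = -2\theta_{x_0}(u)/v(u)$, giving precisely the two points listed in the statement.

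For $L_u \cap Q_{\leq a}$, the inequality $v(x_0 + tu) \leq a$ becomes $t\bigl(t\,v(u) + 2\theta_{x_0}(u)\bigr) \leq 0$. Dividing by the positive quantity $v(u)$ and setting $s = -2\theta_{x_0}(u)/v(u)$, this reads $t(t - s) \leq 0$, so $t$ ranges over the closed interval with endpoints $0$ and $s$, which is exactly the segment displayed in the lemma.

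There is no real obstacle: the argument is a direct substitution plus the solution of a quadratic. The only point that needs to be highlighted is the use of positive definiteness of $v$ to guarantee $v(u) > 0$, which is what keeps the quadratic genuinely quadratic and makes the division by $v(u)$ legitimate.
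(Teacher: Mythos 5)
Your proposal is correct and follows essentially the same route as the paper: substitute $x = x_0 + tu$, expand by bilinearity using $v(x_0)=a$, and reduce both assertions to the quadratic (in)equality $2\theta_{x_0}(u)t + v(u)t^2 \leq 0$ with roots $t=0$ and $t=-2\theta_{x_0}(u)/v(u)$. Your explicit remark that $t$ ranges over the closed interval between $0$ and $-2\theta_{x_0}(u)/v(u)$ (in whichever order) and that $v(u)>0$ justifies the division is a slightly more careful phrasing of the same argument.
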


\begin{proof} The inequality $v(x_0+tu)\leq a$ is equivalent to
$2\theta_{x_0}(u)t+v(u)t^2\leq 0$ and the equality holds if and only
if $t=0$ or $t=-2\frac{\theta_{x_0}(u)}{v(u)}$.

\end{proof}

\begin{lemma}\label{2.1.20} Let $x_0\in Q_a$.

{\rm (i)} One has $Q_{\leq a}\subset \theta_{x_0}(\leq)$.

{\rm (ii)} One has $Q_{\leq a}\cap \theta_{x_0}=Q_a\cap
\theta_{x_0}=\{x_0\}$.

{\rm (iii)} One has $Q_{\leq a}\backslash\{x_0\}\subset
\theta_{x_0}(<)$.

\end{lemma}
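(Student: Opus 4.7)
The plan is to derive all three parts in one stroke from Lemma~\ref{2.1.10} applied to an arbitrary chord through $x_0$. The starting point is the observation that $\theta_{x_0}(x_0)={}^t\!x_0Qx_0=v(x_0)=a$, so $x_0\in Q_a\cap\theta_{x_0}$. Hence the three assertions really concern how the closed $Q$-ball $Q_{\leq a}$ sits relative to the tangent hyperplane, and it will suffice to check that $\theta_{x_0}(x)<a$ for every $x\in Q_{\leq a}$ different from $x_0$.

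To that end I would pick any such $x$ and write $t=\|x-x_0\|>0$, $u=(x-x_0)/t\in S^{n-1}$, so that $x=x_0+tu\in L_u\cap Q_{\leq a}$. Lemma~\ref{2.1.10} then forces
\[
0\le t\le -2\frac{\theta_{x_0}(u)}{v(u)}.
\]
Since $v$ is positive definite, $v(u)>0$, and combining this with $t>0$ gives the key inequality $\theta_{x_0}(u)\le -v(u)t/2<0$. Linearity of $\theta_{x_0}$ then yields
\[
\theta_{x_0}(x)=\theta_{x_0}(x_0)+t\,\theta_{x_0}(u)=a+t\,\theta_{x_0}(u)<a,
\]
which is precisely part~(iii).

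Parts~(i) and~(ii) now follow immediately: every $x\in Q_{\leq a}$ satisfies $\theta_{x_0}(x)\le a$, so $Q_{\leq a}\subset\theta_{x_0}(\le)$; and the inequality above is strict unless $x=x_0$, in which case $\theta_{x_0}(x_0)=a$, so the only point of $Q_{\leq a}\cap\theta_{x_0}$ is $x_0$, which a fortiori equals $Q_a\cap\theta_{x_0}$. I do not see any real obstacle here; the only detail requiring attention is the passage $u\ne 0\Rightarrow v(u)>0$, which is guaranteed by the positive definiteness hypothesis already in force throughout~\S\ref{2.1}.
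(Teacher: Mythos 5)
Your proof is correct and follows essentially the same route as the paper: both rest on Lemma~\ref{2.1.10} applied to the chord from $x_0$ through an arbitrary point of $Q_{\leq a}$. The only difference is organizational — you establish the strict inequality of part (iii) directly and read off (i) and (ii), whereas the paper proves (i) with a non-strict estimate and (ii) by contradiction before deducing (iii); your ordering is, if anything, slightly cleaner.
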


\begin{proof} {\rm (i)} Let $y\in Q_{\leq a}$, $y\neq x_0$, and let
$y\in L_u$. In accord with Lemma~\ref{2.1.10}, $y=x_0+tu$ where
$0\leq t\leq -2\frac{\theta_{x_0}(u)}{v(u)}$. We have
$\theta_{x_0}(y)=\theta_{x_0}(x_0)+
t\theta_{x_0}(u)=a+t\theta_{x_0}(u)\leq
a-2\frac{(\theta_{x_0}(u))^2}{v(u)}\leq a$.

{\rm (ii)} Let us suppose that there exists a point $y$, $y\neq
x_0$, with $y\in Q_{\leq a}\cap \theta_{x_0}$ and let
$u=\frac{1}{\|y-x_0\|}(y-x_0)$. Then $\theta_{x_0}(u)=0$, $y\in
L_u$, and Lemma~\ref{2.1.10} implies $L_u\cap Q_{\leq a}=\{x_0\}$
--- a contradiction with $y\in L_u\cap Q_{\leq a}$. Now, because of
the inclusions $\{x_0\}\subset Q_a\cap \theta_{x_0}\subset Q_{\leq
a}\cap \theta_{x_0}=\{x_0\}$, part {\rm (ii)} is proved.

Parts {\rm (i)} and {\rm (ii)} yield part {\rm (iii)}.

\end{proof}

We remind that $h_r$ is a hyperplane in $\hbox{\ccc R}^n$, defined
by the equation $\pi(x)=r$, where $\pi(x)$ is a non-zero linear
form, and $q_{a,r}=Q_a\cap h_r$.

\begin{lemma}\label{2.1.30} Let $x_0\in q_{a,r}$.

{\rm (i)} If $Q_a\subset h_r(\leq)$, then $h_r=\theta_{x_0}$.

{\rm (ii)} If $Q_{<a_0}\subset h_{r_0}(<)$, then $Q_a\subset
h_r(\leq)$.

\end{lemma}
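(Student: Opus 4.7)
My plan for (i) is to identify $h_r$ and $\theta_{x_0}$ as the unique supporting hyperplane of the strictly convex body $Q_{\leq a}$ at the boundary point $x_0\in Q_a$. First I would upgrade the hypothesis $Q_a\subset h_r(\leq)$ to $Q_{\leq a}\subset h_r(\leq)$ (the half-space $h_r(\leq)$ is convex and $Q_{\leq a}$ is the convex hull of its boundary $Q_a$). Since $x_0\in Q_a\cap h_r$, this makes $h_r$ a supporting hyperplane of $Q_{\leq a}$ at $x_0$, and Lemma~\ref{2.1.20}~(i) gives the same property for $\theta_{x_0}$. Uniqueness of the supporting hyperplane at a smooth boundary point of a strictly convex body then forces $h_r=\theta_{x_0}$.

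If I want to sidestep any external appeal to uniqueness, my alternative plan is a direct contradiction using only tools from the present subsection. Assume $h_r\neq\theta_{x_0}$; since both pass through $x_0$, I can choose $u$ with $\theta_{x_0}(u)=0$ and $\pi(u)>0$, and $u'$ with $\pi(u')=0$ and $\theta_{x_0}(u')>0$ (after adjusting signs). Setting $w=u-\epsilon u'$ for small $\epsilon>0$ gives $\pi(w)>0$ and $\theta_{x_0}(w)<0$, so Lemma~\ref{2.1.10} produces a second intersection point $x_0+tw\in L_w\cap Q_a$ with $t=-2\theta_{x_0}(w)/v(w)>0$. Its $\pi$-value $r+t\pi(w)$ strictly exceeds $r$, contradicting $Q_a\subset h_r(\leq)$.

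For (ii), reading $a_0,r_0$ as the parameters $a,r$ attached to $x_0$ (as the surrounding discussion of tangency compels), my plan is a one-line closure argument. The open ellipsoid $Q_{<a}$ is dense in the closed ellipsoid $Q_{\leq a}$ (scale from $0\in Q_{<a}$ by factors slightly less than $1$ to approach any boundary point), so $Q_a\subset\overline{Q_{<a}}$. Combined with monotonicity of closure and $\overline{h_r(<)}=h_r(\leq)$, the hypothesis yields $Q_a\subset\overline{Q_{<a}}\subset\overline{h_r(<)}=h_r(\leq)$.

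The main obstacle is the supporting-hyperplane uniqueness invoked in (i); once that is pinned down either via strict convexity and smoothness of $Q_{\leq a}$ or via the Lemma~\ref{2.1.10}-based contradiction above, part (ii) follows from a short topological closure argument.
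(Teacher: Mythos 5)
Your proposal is correct. For part (ii) your closure argument is exactly the paper's proof (scale a boundary point by $1-\tfrac{1}{n}$ and pass to the limit), including the reading $a_0=a$, $r_0=r$, which is also how the paper itself proves and later uses the statement in Theorem~\ref{2.2.40}. For part (i) your two plans sit on either side of the paper's proof. The self-contained fallback is essentially the paper's argument reorganized: the paper shows $\theta_{x_0}(u)\pi(u)\geq 0$ for every direction $u$ by writing each point of $Q_a\setminus\{x_0\}$ as the second intersection point $x_0-2\frac{\theta_{x_0}(u)}{v(u)}u$ from Lemma~\ref{2.1.10}, and then rules out non-proportionality of $\theta_{x_0}$ and $\pi$ by a coordinate argument; you instead assume non-proportionality, pick a single direction $w$ with $\theta_{x_0}(w)<0<\pi(w)$, and let that same second intersection point violate $Q_a\subset h_r(\leq)$ --- the same engine, with the contradiction arguably extracted more directly (only note that Lemma~\ref{2.1.10} is stated for unit vectors, so normalize $w$, which changes nothing since the second intersection point is invariant under rescaling of the direction). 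Your primary plan, identifying $h_r$ and $\theta_{x_0}$ as the unique supporting hyperplane of the strictly convex body $Q_{\leq a}$ at the smooth boundary point $x_0$, is genuinely different and more conceptual, but it imports a uniqueness fact from convex geometry that the paper never establishes (the paper only records smoothness of $Q_a$ and the support property of $\theta_{x_0}$ in Lemma~\ref{2.1.20}); since you flag this dependence and supply the in-paper alternative, there is no gap.
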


\begin{proof} {\rm (i)} When $y$ varies through
$Q_a\backslash\{x_0\}$, then $u=\frac{1}{\|y-x_0\|}(y-x_0)$ varies
bijectively through $S^{n-1}\cap\theta_{x_0}(<)$. On the other hand,
since $Q_a\subset h_r(\leq)$, then $y\in Q_a\backslash\{x_0\}$
yields $\pi(y)\leq r$, that is,
$\pi(x_0-2\frac{\theta_{x_0}(u)}{v(u)}u)\leq r$, and hence
$\theta_{x_0}(u)\pi(u)\geq 0$ for all $u\in
S^{n-1}\cap\theta_{x_0}(<)$. The last inequality also holds for all
$u\in S^{n-1}\cap\theta_{x_0}(>)$ because
$\theta_{x_0}(-u)\pi(-u)\geq 0$. Thus, we have
$\theta_{x_0}(u)\pi(u)\geq 0$ for all $u\in S^{n-1}$, therefore for
all vectors $u\in \hbox{\ccc R}^n$. If the linear forms
$\theta_{x_0}$ and $\pi$ are not proportional, then after an
appropriate change of the coordinates, $\theta_{x_0}$ and $\pi$ can
serve as coordinate functions in $\hbox{\ccc R}^n$ --- a
contradiction.

{\rm (ii)} Let $y\in Q_a$ and let us set $y_n=(1-\frac{1}{n})y$ for
any positive integer $n$. Then $y_n\in Q_{<a_0}$ and
$\lim_{n\to\infty}y_n=y$. Since $Q_{<a_0}\subset h_{r_0}(<)$, we
obtain $h_{r_0}(y_n)<r_0$, hence $h_{r_0}(y)\leq r_0$.

\end{proof}

\subsection{Some Extremal Properties}

\label{2.2}

Let $h_r\colon\pi(x)=r$ be a hyperplane in $\hbox{\ccc R}^n$,
$\pi(x)=p_1x_1+\cdots+p_nx_n$, and let us set
$p={}^t(p_1,\ldots,p_n)$. We denote $q_{a,r}=Q_a\cap h_r$.

\begin{lemma}\label{2.2.1} Let $x_0\in \hbox{\ccc R}^n
\backslash\{0\}$, $a>0$, and $r>0$. The following four statements
are equivalent:

{\rm (i)} One has $x_0\in q_{a,r}$ and $Qx_0\in\hbox{\ccc R}p$.

{\rm (ii)} One has $rQx_0=ap$ and $a=r^2({}^t\!pQ^{-1}p)^{-1}$.

{\rm (iii)} One has $x_0\in q_{a,r}$ and $\theta_{x_0}=h_r$.

{\rm (iv)} One has $q_{a,r}=\{x_0\}$.

\end{lemma}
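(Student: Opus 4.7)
The plan is to prove two elementary algebraic equivalences $(\text{i})\Leftrightarrow(\text{ii})$ and $(\text{i})\Leftrightarrow(\text{iii})$, and then close the loop with $(\text{iii})\Leftrightarrow(\text{iv})$ using the preceding lemmas on tangency.

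First I would handle $(\text{i})\Leftrightarrow(\text{ii})$ by pure algebra. Assuming (i), write $Qx_0=\lambda p$ for some scalar $\lambda$; then $a=v(x_0)={}^tx_0Qx_0=\lambda\,{}^tx_0p=\lambda\pi(x_0)=\lambda r$, which forces $\lambda=a/r$ (since $r>0$), i.e., $rQx_0=ap$. Inverting gives $x_0=(a/r)Q^{-1}p$; substituting into $\pi(x_0)=r$ yields $(a/r)\,{}^tpQ^{-1}p=r$, which is the second equation in (ii). Conversely, starting from (ii), the formula $x_0=(a/r)Q^{-1}p$ together with $a=r^2({}^tpQ^{-1}p)^{-1}$ immediately returns both $v(x_0)=a$ and $\pi(x_0)=r$, while $Qx_0=(a/r)p\in\mathbb{R}p$ is visible on the nose.

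Next, for $(\text{i})\Leftrightarrow(\text{iii})$, I would observe that the two hyperplanes $h_r\colon{}^tpx=r$ and $\theta_{x_0}\colon{}^tx_0Qx=a$ coincide if and only if their normal covectors $p$ and $Qx_0$ are collinear and the corresponding constants match; but under $x_0\in q_{a,r}$ the matching of constants is automatic (as in the previous paragraph, $Qx_0=\lambda p$ forces $\lambda=a/r$). Hence $\theta_{x_0}=h_r$ if and only if $Qx_0\in\mathbb{R}p$, which is exactly the extra condition in (i).

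Finally, the equivalence $(\text{iii})\Leftrightarrow(\text{iv})$ is where the geometric content of Section~2.1 enters. The direction $(\text{iii})\Rightarrow(\text{iv})$ is immediate from Lemma~\ref{2.1.20}, {\rm (ii)}: if $h_r=\theta_{x_0}$ then $Q_a\cap h_r=Q_a\cap\theta_{x_0}=\{x_0\}$. For the converse $(\text{iv})\Rightarrow(\text{iii})$ I would argue directionally: for every unit vector $u$ with $\pi(u)=0$ the line $L_u$ lies inside $h_r$, and Lemma~\ref{2.1.10} gives $L_u\cap Q_a=\{x_0,\,x_0-2\theta_{x_0}(u)v(u)^{-1}u\}$. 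By hypothesis (iv) this two-point set must collapse to $\{x_0\}$, forcing $\theta_{x_0}(u)=0$. Thus the linear form $\theta_{x_0}$ vanishes on $\ker\pi$, so $\theta_{x_0}=\mu\pi$ for some scalar $\mu$, and evaluating at $x_0$ yields $\mu=a/r$; consequently $h_r$ and $\theta_{x_0}$ are the same hyperplane. The main point requiring care is this last implication, since one must rule out the possibility that $h_r$ could meet $Q_a$ in a single point without being tangent there; Lemma~\ref{2.1.10}, applied to every direction inside $h_r$, is precisely the tool that excludes this.
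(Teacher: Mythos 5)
Your proposal is correct and follows essentially the same route as the paper: the same algebraic computation for $(\mathrm{i})\Leftrightarrow(\mathrm{ii})$, the normal-vector comparison for $(\mathrm{i})\Leftrightarrow(\mathrm{iii})$, Lemma~\ref{2.1.20}, (ii), for $(\mathrm{iii})\Rightarrow(\mathrm{iv})$, and for $(\mathrm{iv})\Rightarrow(\mathrm{iii})$ the same argument that $\theta_{x_0}$ must vanish on every direction inside $h_r$ (the paper redoes the quadratic-in-$t$ computation that you instead cite from Lemma~\ref{2.1.10}).
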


\begin{proof} ${\rm (i)}\Longrightarrow {\rm (ii)}$ Let
$Qx_0=bp$, $b\in\hbox{\ccc R}$. We have
\[
a=v(x_0)=^t\!x_0Qx_0= {}^t\!x_0(bp)=b{}^t\!px_0=b\pi(x_0)=br,
\]
therefore $rQx_0=ap$. On the other hand, we obtain
\[
a={}^t\!x_0Qx_0=\frac{a}{r}{}^t\!pQ^{-1}\frac{a}{r}p=
\frac{a^2}{r^2}{}^t\!pQ^{-1}p,
\]
hence $a=r^2({}^t\!pQ^{-1}p)^{-1}$.

${\rm (ii)}\Longrightarrow {\rm (i)}$ We have $Qx_0\in\hbox{\ccc
R}p$, and, moreover, ${}^t\!x_0=\frac{a}{r}{}^t\!pQ^{-1}$.
$\pi(x_0)={}^t\!px_0={}^t\!x_0p=\frac{a}{r}{}^t\!pQ^{-1}p=
\frac{a}{r}\frac{r^2}{a}=r$, hence $x_0\in h_r$. Finally,
$v(x_0)={}^t\!x_0Qx_0=\frac{a}{r}{}^t\!pQ^{-1}Qx_0=
\frac{a}{r}{}^t\!px=\frac{a}{r}\pi(x_0)=a$, therefore $x_0\in Q_a$.

The equivalence of parts {\rm (i)} and {\rm (iii)} is
straightforward. Part {\rm (iii)} and Lemma~\ref{2.1.20}, {\rm
(ii)}, imply part {\rm (iv)}.

${\rm (iv)}\Longrightarrow {\rm (iii)}$  Let $L=\{x_0+t z\mid
t\in\hbox{\ccc R}\}$, $z\neq 0$, be a line in $h_r$, that is,
$\pi(z)=0$. The roots of the quadratic equation $v(x_0+tz)=a$
correspond to the intersection points of the line $L$ and the
ellipsoid $Q_a$. Taking into account that
$v(x_0+tz)=v(x_0)+2\theta_{x_0}(z)t+v(z)t^2$, we obtain the
equivalent equation $2\theta_{x_0}(z)t+v(z)t^2=0$. Since
$q_{a,r}=\{x_0\}$, this quadratic equation has a double root $t=0$,
that is, $\theta_{x_0}(z)=0$. Thus, we obtain $L\subset\theta_{x_0}$
and therefore $\theta_{x_0}=h_r$.

\end{proof}

\begin{corollary} \label{2.2.5} Under conditions {\rm (i)} --
{\rm(iv)} one has $\theta_{x_0}(x)=\frac{a}{r}\pi(x)$.

\end{corollary}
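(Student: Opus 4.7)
The plan is to extract the result directly from condition {\rm (ii)} of Lemma~\ref{2.2.1}, which is the cleanest of the four equivalent statements for this computation. Since all four conditions are equivalent, we may work under the hypothesis $rQx_0=ap$, i.e.\ $Qx_0=\frac{a}{r}p$.

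The key observation is simply that the symmetry of $Q$ lets us move $Q$ onto $x_0$ inside the expression $\theta_{x_0}(x)={}^t\!x_0Qx$. Indeed, because $Q={}^t\!Q$, we have ${}^t\!x_0Q={}^t(Qx_0)$, so
\[
\theta_{x_0}(x)={}^t\!x_0Qx={}^t(Qx_0)x=\frac{a}{r}{}^t\!p\,x=\frac{a}{r}\pi(x),
\]
which is the claimed identity.

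There is essentially no obstacle; the only thing to verify is that $r\neq 0$, so that the division in $Qx_0=\frac{a}{r}p$ is legitimate. This is already built into the hypotheses of Lemma~\ref{2.2.1}, where $r>0$ is assumed. Thus, the proof proposal is just the one-line display above, preceded by the reduction to condition {\rm (ii)}.
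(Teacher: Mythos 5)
Your proof is correct and follows the route the paper itself intends: the corollary is stated without proof precisely because it is immediate from condition {\rm (ii)} of Lemma~\ref{2.2.1}, namely $Qx_0=\frac{a}{r}p$ (legitimate since $r>0$), combined with the symmetry of $Q$ to get $\theta_{x_0}(x)={}^t(Qx_0)x=\frac{a}{r}\pi(x)$. Nothing is missing.
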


\begin{remark}\label{2.2.10} {\rm If $x_0=0$, then parts {\rm (i)},
{\rm (ii)}, and {\rm (iv)} of Lemma~\ref{2.2.1} hold for $a=r=0$.}

\end{remark}

Let us set $c_p=({}^t\!pQ^{-1}p)^{-1}$,
$E_p^{\left(Q\right)}=\{(a,r)\mid a=c_pr^2,r\geq 0\}$,
$x(a,r)=\frac{a}{r}Q^{-1}p$ for any $(a,r)\in E_p^{\left(Q\right)}$
with $r>0$, $x(0,0)=0$, and
\[
\ef_p^{\left(Q\right)}=\{x\in \hbox{\ccc R}^n\mid x=x(a,r),\hbox{\
}(a,r)\in E_p^{\left(Q\right)}\}.
\]
Thus, the set $\ef_p^{\left(Q\right)}$ consists of all vectors $x\in
\hbox{\ccc R}^n$ which satisfy the four equivalent conditions from
Lemma~\ref{2.2.1}. Note that $0\in\ef_p^{\left(Q\right)}$ and if
$x(a,r)\in\ef_p^{\left(Q\right)}$, then $\{x(a,r)\}=q_{a,r}$. In
other words, Lemma~\ref{2.2.1} implies

\begin{corollary}\label{2.2.15} One has
\[
\ef_p^{\left(Q\right)}=\cup_{r\geq 0,a=c_pr^2}q_{a,r}.
\]

\end{corollary}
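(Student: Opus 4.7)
The plan is to read off the corollary directly from Lemma~\ref{2.2.1}, treating the degenerate case $r=0$ separately via Remark~\ref{2.2.10}. The statement is essentially a reformulation: on the one hand $\ef_p^{\left(Q\right)}$ is described as the image of the parametrization $(a,r)\mapsto x(a,r)$ on $E_p^{\left(Q\right)}$, while on the other hand the right-hand side is the union of the traces $q_{a,r}$ for the same parameters. So the task is just to match up these descriptions using the four-fold equivalence in the lemma.

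First I would verify the inclusion $\ef_p^{\left(Q\right)}\subseteq \cup_{r\geq 0,\, a=c_pr^2}q_{a,r}$. Pick $x=x(a,r)\in\ef_p^{\left(Q\right)}$ with $(a,r)\in E_p^{\left(Q\right)}$. If $r>0$, then $x=\tfrac{a}{r}Q^{-1}p$ gives $rQx=ap$, and $a=c_pr^2=r^2({}^t\!pQ^{-1}p)^{-1}$ holds by definition; thus $x$ satisfies part {\rm (ii)} of Lemma~\ref{2.2.1}. The implication ${\rm (ii)}\Longrightarrow{\rm (iv)}$ then forces $q_{a,r}=\{x\}$, so in particular $x\in q_{a,r}$. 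The remaining case $(a,r)=(0,0)$ is handled by Remark~\ref{2.2.10}: here $x(0,0)=O$, while $q_{0,0}=Q_0\cap h_0=\{O\}$ because $v$ is positive definite and $\pi(O)=0$, so indeed $x\in q_{0,0}$.

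For the reverse inclusion $\cup_{r\geq 0,\, a=c_pr^2}q_{a,r}\subseteq \ef_p^{\left(Q\right)}$, take $y\in q_{a,r}$ with $(a,r)\in E_p^{\left(Q\right)}$. If $r>0$, the computation in the previous paragraph shows that the specific point $x(a,r)=\tfrac{a}{r}Q^{-1}p$ already satisfies condition {\rm (ii)} of Lemma~\ref{2.2.1}, hence also {\rm (iv)}, i.e., $q_{a,r}=\{x(a,r)\}$. Consequently $y=x(a,r)\in\ef_p^{\left(Q\right)}$. If $r=0$, then $a=c_p\cdot 0=0$, so $y\in Q_0=\{O\}$ forces $y=O=x(0,0)\in\ef_p^{\left(Q\right)}$.

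Combining the two inclusions yields the corollary. The only subtlety is that the lemma as stated requires $a,r>0$, so the point $(0,0)$ has to be treated by hand using Remark~\ref{2.2.10} and the positive-definiteness of $v$; apart from that, the proof is pure bookkeeping on top of Lemma~\ref{2.2.1}, so I do not foresee a genuine obstacle.
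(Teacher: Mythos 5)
Your argument is correct and is exactly what the paper intends: the paper derives the corollary directly from Lemma~\ref{2.2.1} (using that for $x(a,r)\in\ef_p^{\left(Q\right)}$ one has $\{x(a,r)\}=q_{a,r}$, with the origin handled as in Remark~\ref{2.2.10}), and your two inclusions just spell out that bookkeeping. No substantive difference from the paper's route.
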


In case $M$ is a singleton, Theorem~\ref{1.5.20} yields the
following two corollaries:

\begin{corollary}\label{2.2.20} Let
$x,x_0\in \ef_p^{\left(Q\right)}$, $x=x(a,r)$, $x_0=x(a_0,r_0)$.

{\rm (i)} If $a=a_0$, then $q_{a,r_0}=\{x_0\}$.

{\rm (ii)} If $a>a_0$, then $q_{a,r_0}$ is an ellipsoid in the
hyperplane $h_{r_0}$.

{\rm (iii)} If $a<a_0$, then $q_{a,r_0}=\emptyset$.

\end{corollary}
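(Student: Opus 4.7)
The plan is to reduce the statement directly to Theorem~\ref{1.5.20} in the singleton case $|M|=1$. By the coordinate freedom noted at the start of Section~\ref{1.1}, I may assume $\pi$ coincides with a coordinate function, so that with $M=\{1\}$ one has $h_r=h(\tau^{(M)})$ for $\tau_1=r$. Positive definiteness of $v$ is preserved, and the scalar $c_p=({}^t\!pQ^{-1}p)^{-1}$ is unchanged under such a linear change of variables.

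The key preliminary step is to identify $\gamma_M(r)$ with $c_p r^2$. Since $|M|=1$, Theorem~\ref{1.5.20}(i) gives that $\gamma_M$ is a positive definite quadratic form in one variable, hence $\gamma_M(r)=\kappa r^2$ for some $\kappa>0$. From $x_0=x(a_0,r_0)\in\ef_p^{(Q)}$ with $r_0>0$ and Lemma~\ref{2.2.1}(iv) I have $q_{a_0,r_0}=\{x_0\}$, and Theorem~\ref{1.5.20}(iii) then forces $a_0=\gamma_M(r_0)=\kappa r_0^2$; combined with $a_0=c_p r_0^2$, this yields $\kappa=c_p$. The degenerate case $r_0=0$, $x_0=O$, reduces to elementary facts about $Q_0=\{O\}$ and $h_0$.

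With the equality $\gamma_M(r_0)=a_0$ in hand, the three assertions follow by immediate application of Theorem~\ref{1.5.20} to $q_{a,r_0}=Q_a\cap h_{r_0}$ according to the comparison between $a$ and $\gamma_M(r_0)=a_0$: part~(i) follows from Theorem~\ref{1.5.20}(iii), the degenerate ellipsoid $\{c^{(M^c)}(\tau)\}$ coinciding with $\{x_0\}$ because $x_0\in q_{a_0,r_0}$; part~(ii) follows from Theorem~\ref{1.5.20}(ii) since $a>a_0=\gamma_M(r_0)$; and part~(iii) follows from Theorem~\ref{1.5.20}(iv) since $a<a_0=\gamma_M(r_0)$. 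The only step requiring care is the bookkeeping that identifies the scalars $\kappa$ and $c_p$; once this is done, no substantial obstacle remains, as the geometric content is already encoded in Theorem~\ref{1.5.20}.
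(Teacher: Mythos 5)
Your proposal is correct and follows essentially the same route the paper intends: Corollary~\ref{2.2.20} is stated there as an immediate consequence of Theorem~\ref{1.5.20} in the singleton case $M=\{\ell\}$ (after the coordinate normalization of Section~\ref{1.1}), and your bookkeeping identifying $\gamma_M(r_0)=a_0$ (equivalently $\kappa=c_p$) via Lemma~\ref{2.2.1} is exactly the implicit step being filled in. One cosmetic remark: the equality $a_0=\gamma_M(r_0)$ is forced by the full trichotomy (ii)--(iv) of Theorem~\ref{1.5.20} applied to $q_{a_0,r_0}=\{x_0\}$, not by part (iii) alone, which only gives the converse direction.
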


\begin{corollary}\label{2.2.25} Let
$x,x_0\in \ef_p^{\left(Q\right)}$, $x=x(a,r)$, $x_0=x(a_0,r_0)$.

{\rm (i)} If $r=r_0$, then $q_{a_0,r}=\{x_0\}$.

{\rm (ii)} If $r<r_0$, then $q_{a_0,r}$ is an ellipsoid in the
hyperplane $h_{r_0}$.

{\rm (iii)} If $r>r_0$, then $q_{a_0,r}=\emptyset$.

\end{corollary}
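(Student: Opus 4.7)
The plan is to analyze the parallel family $\{h_r\}$ against the fixed ellipsoid $Q_{a_0}$, using that $h_{r_0}$ is precisely the tangent hyperplane to $Q_{a_0}$ at $x_0$ (Lemma~\ref{2.2.1}, {\rm (iii)}). I would handle the three cases in the order {\rm (i)}, {\rm (iii)}, {\rm (ii)}, since the first two need only a tangent half-space inclusion while {\rm (ii)} requires a genuine intersection-theoretic result.

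Part {\rm (i)} is immediate: $x_0=x(a_0,r_0)\in\ef_p^{\left(Q\right)}$ means that condition {\rm (i)} of Lemma~\ref{2.2.1} holds at $(a_0,r_0)$, and the equivalence {\rm (i)}$\Leftrightarrow${\rm (iv)} delivers $q_{a_0,r_0}=\{x_0\}$. For part {\rm (iii)}, I would upgrade the tangent half-space inclusion $Q_{\leq a_0}\subset\theta_{x_0}(\leq)$ of Lemma~\ref{2.1.20}, {\rm (i)}, to $Q_{a_0}\subset h_{r_0}(\leq)$: Corollary~\ref{2.2.5} gives $\theta_{x_0}(x)=\frac{a_0}{r_0}\pi(x)$, and since $\frac{a_0}{r_0}>0$ the two half-spaces coincide. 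Then $r>r_0$ forces $h_r\cap h_{r_0}(\leq)=\emptyset$, so $q_{a_0,r}=\emptyset$.

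For part {\rm (ii)}, I would invoke Theorem~\ref{1.5.20}, {\rm (ii)}, applied to the single-hyperplane constraint $\pi(x)=r$. The analog of $\gamma_M(\tau)$ in this setting is the squared $Q$-distance from $O$ to $h_r$, which by Corollary~\ref{1.5.50} (or directly from the explicit formula in Lemma~\ref{2.2.1}, {\rm (ii)}) equals $c_pr^2$. Since $0\leq r<r_0$ yields $c_pr^2<c_pr_0^2=a_0$, Theorem~\ref{1.5.20}, {\rm (ii)}, produces the desired ellipsoid structure on $q_{a_0,r}$ inside $h_r$, with center the $Q$-foot of perpendicular from $O$ to $h_r$ and $Q$-radius $\sqrt{a_0-c_pr^2}$.

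The only real obstacle I anticipate is a bookkeeping one: Theorem~\ref{1.5.20} is phrased for constraints of the form $x_j=\tau_j$, whereas here the constraint involves the general linear form $\pi$. Bridging this gap is a routine linear change of coordinates that turns $\pi$ into a coordinate function; alternatively, the single-hyperplane version of the theorem is essentially already encoded in Lemma~\ref{2.2.1}. No new ideas beyond this translation are required.
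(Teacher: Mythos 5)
Your proposal is correct and, in its decisive part (ii), follows exactly the paper's route: the paper deduces the whole corollary from the singleton-$M$ instance of Theorem~\ref{1.5.20}, where the single constraint $\pi(x)=r$ gives $\gamma(r)=c_pr^2$ and $a_0=c_pr_0^2$, so that (with $r,r_0\geq 0$) the cases $r=r_0$, $r<r_0$, $r>r_0$ are precisely $a_0=\gamma(r)$, $a_0>\gamma(r)$, $a_0<\gamma(r)$, i.e.\ parts (iii), (ii), (iv) of that theorem, and the coordinate-change bookkeeping you mention is likewise implicit in the paper's Section~\ref{1.1} setup. Your separate treatments of (i) via Lemma~\ref{2.2.1} and of (iii) via Lemma~\ref{2.1.20} and Corollary~\ref{2.2.5} are sound shortcuts subsumed by the same trichotomy; just note that the (iii) argument tacitly needs $r_0>0$ (if $x_0=x(0,0)=0$ the conclusion is immediate, since $Q_0=\{0\}$ and $0\notin h_r$ for $r>0$), and that the ``$h_{r_0}$'' in the statement of (ii) is to be read as $h_r$, as you did.
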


Corollaries~\ref{2.2.20} and~\ref{2.2.25} imply the following two
equivalent propositions:

\begin{proposition}\label{2.2.30} Let
$x,x_0\in \ef_p^{\left(Q\right)}$, $x=x(a,r)$, $x_0=x(a_0,r_0)$. One
has
\[
r_0=\max_{q_{a_0,r}\neq\emptyset}r\hbox{\rm\ and\ }
a_0=\mathop{\min}_{q_{a,r_0}\neq\emptyset}a.
\]

\end{proposition}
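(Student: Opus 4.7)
The plan is to read off the proposition directly from the two preceding corollaries, since they were set up to do exactly this trichotomy.

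First I would handle the equality $r_0=\max_{q_{a_0,r}\neq\emptyset}r$. Fix $x_0=x(a_0,r_0)\in\ef_p^{\left(Q\right)}$ and let $r$ vary. Corollary~\ref{2.2.25} gives a complete trichotomy in $r$ against the fixed $a_0$: when $r=r_0$, the intersection $q_{a_0,r}$ equals the singleton $\{x_0\}$ and is in particular nonempty; when $r<r_0$, it is an ellipsoid (nonempty); and when $r>r_0$, it is empty. Hence $\{r\in\hbox{\ccc R}\mid q_{a_0,r}\neq\emptyset\}=(-\infty,r_0]$, and its maximum is $r_0$, attained at $x_0$.

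Next I would handle $a_0=\min_{q_{a,r_0}\neq\emptyset}a$ by the symmetric argument using Corollary~\ref{2.2.20}. With $r_0$ fixed and $a$ varying: $a=a_0$ yields $q_{a,r_0}=\{x_0\}\neq\emptyset$; $a>a_0$ yields an ellipsoid in $h_{r_0}$, hence nonempty; $a<a_0$ yields $q_{a,r_0}=\emptyset$. Thus $\{a\mid q_{a,r_0}\neq\emptyset\}=[a_0,\infty)$, whose minimum is $a_0$, again attained at $x_0$.

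There is no real obstacle here; the content of the proposition is already packaged into Corollaries~\ref{2.2.20} and~\ref{2.2.25}. The only thing to note is that attainment of the extrema must be explicitly recorded (so that $\max$ and $\min$, rather than $\sup$ and $\inf$, are justified), and this is guaranteed because the boundary cases $r=r_0$ and $a=a_0$ do produce the nonempty singleton $\{x_0\}$ rather than the empty set. So the proof is essentially a two-line citation of the preceding corollaries, together with the observation that each gives a half-line of nonemptiness with the claimed endpoint.
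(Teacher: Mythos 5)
Your proof is correct and takes essentially the same route as the paper, which offers no separate argument beyond stating that Corollaries~\ref{2.2.20} and~\ref{2.2.25} imply the proposition: the trichotomies give emptiness beyond the endpoint and the attained singleton $\{x_0\}$ at it. One small inaccuracy: since $Q_{a_0}$ is bounded, the set $\{r\mid q_{a_0,r}\neq\emptyset\}$ is $[-r_0,r_0]$, not $(-\infty,r_0]$ (and the corollaries, as stated, only cover $r\geq 0$), but this does not affect the value or attainment of the maximum.
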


\begin{proposition}\label{2.2.35} Given
$x_0\in\ef_p^{\left(Q\right)}$, one has
\[
\pi(x_0)=\max_{x\in\ef_p^{\left(Q\right)}, v\left(x\right)\leq
v\left(x_0\right)}\pi(x) \hbox{\rm\ and\ }
v(x_0)=\mathop{\min}_{x\in\ef_p^{\left(Q\right)},
\pi\left(x\right)\geq \pi\left(x_0\right)}v(x).
\]

\end{proposition}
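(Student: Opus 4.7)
The plan is to exploit the parametrization $\ef_p^{\left(Q\right)}=\bigcup_{r\geq 0,\,a=c_pr^2}q_{a,r}$ from Corollary~\ref{2.2.15}, which collapses the two-dimensional data $(v(x),\pi(x))$ of a point $x\in\ef_p^{\left(Q\right)}$ onto the one-dimensional curve $E_p^{\left(Q\right)}$, along which $a$ is a strictly monotone function of $r\geq 0$. I would first unpack the setup: writing $x=x(a,r)$ and $x_0=x(a_0,r_0)$, Lemma~\ref{2.2.1} gives $v(x)=a$, $\pi(x)=r$, $v(x_0)=a_0$, $\pi(x_0)=r_0$, together with $a=c_pr^2$, $a_0=c_pr_0^2$, and, crucially, $r,r_0\geq 0$ by definition of $E_p^{\left(Q\right)}$. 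Note that $c_p=({}^t\!pQ^{-1}p)^{-1}>0$ since $Q^{-1}$ is positive definite and $p\neq 0$.

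For the first equality, take any $x\in\ef_p^{\left(Q\right)}$ with $v(x)\leq v(x_0)$, i.e.\ $a\leq a_0$, which reads $c_pr^2\leq c_pr_0^2$. Dividing by $c_p>0$ and using that both $r$ and $r_0$ are nonnegative, this forces $r\leq r_0$, that is, $\pi(x)\leq \pi(x_0)$; equality is attained at $x=x_0$. The second equality is proved symmetrically: any $x\in\ef_p^{\left(Q\right)}$ with $\pi(x)\geq\pi(x_0)$ has $r\geq r_0\geq 0$, hence $a=c_pr^2\geq c_pr_0^2=a_0$, i.e.\ $v(x)\geq v(x_0)$, with equality at $x=x_0$.

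I do not foresee a real obstacle, since the whole point of restricting to $\ef_p^{\left(Q\right)}$ is that the extremal problem reduces to monotonicity of $a=c_pr^2$ on the ray $r\geq 0$. The only pitfall to watch for is the sign constraint $r,r_0\geq 0$ built into the definition of $E_p^{\left(Q\right)}$; without it, $r^2\leq r_0^2$ would not imply $r\leq r_0$. Alternatively, one could deduce the proposition from Proposition~\ref{2.2.30} by translating ``$q_{a_0,r}\neq\emptyset$'' and ``$q_{a,r_0}\neq\emptyset$'' back to points of $\ef_p^{\left(Q\right)}$ via Corollary~\ref{2.2.15}, but the direct scalar computation above is the cleanest route and makes the claimed equivalence of Propositions~\ref{2.2.30} and~\ref{2.2.35} transparent.
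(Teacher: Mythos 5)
Your proof is correct, but it takes a more direct route than the paper. The paper obtains Proposition~\ref{2.2.35} (together with the equivalent Proposition~\ref{2.2.30}) as a consequence of Corollaries~\ref{2.2.20} and~\ref{2.2.25}, i.e.\ via the trichotomy -- point, ellipsoid, empty set -- for the intersections $q_{a,r_0}$ and $q_{a_0,r}$, which in turn rests on Theorem~\ref{1.5.20} applied with $M$ a singleton; the statement is thus read off from the geometry of how the hyperplanes $h_r$ cut the ellipsoids $Q_a$. You instead bypass that intersection theory entirely: using only the parametrization $x=x(a,r)$ with $(a,r)\in E_p^{\left(Q\right)}$ (so $v(x)=a$, $\pi(x)=r$ by Lemma~\ref{2.2.1}, with the degenerate case $r=0$, $x=0$ covered by Remark~\ref{2.2.10}), together with $c_p>0$ and the sign constraint $r\geq 0$, you reduce both extremal claims to the monotonicity of $a=c_pr^2$ on the ray $r\geq 0$. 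This is shorter and self-contained, and you correctly identify the one real pitfall (without $r,r_0\geq 0$ the implication $r^2\leq r_0^2\Rightarrow r\leq r_0$ fails); what the paper's route buys in exchange is that it keeps the statement tied to the emptiness/nonemptiness of the slices $q_{a,r}$, which is exactly the formulation (Proposition~\ref{2.2.30}) exploited afterwards when the constraint $x\in\ef_p^{\left(Q\right)}$ is dropped in Theorem~\ref{2.2.40}.
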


It turns out that we can trow out the constraint condition
$x\in\ef_p^{\left(Q\right)}$ from Proposition~\ref{2.2.35}. We have
the following theorem (compare, for example, with~\cite[Section
2]{[30]}).

\begin{theorem}\label{2.2.40} Let $x_0\in q_{a_0,r_0}$ and
$r_0\geq 0$. The following six statements are equivalent:

{\rm (i)} One has $x_0\in\ef_p^{\left(Q\right)}$.

{\rm (ii)} One has
\[
\pi(x_0)=\max_{v\left(x\right)\leq a_0}\pi(x) \hbox{\rm\ and\ }
v(x_0)=\mathop{\min}_{\pi\left(x\right)\geq r_0}v(x).
\]

{\rm (iii)} One has
\begin{equation}
\pi(x_0)=\max_{v\left(x\right)\leq a_0}\pi(x).\label{2.2.45}
\end{equation}

{\rm (iv)} One has
\[
\pi(x_0)=\max_{v\left(x\right)=a_0}\pi(x).
\]

{\rm (v)} One has
\[
v(x_0)=\mathop{\min}_{\pi\left(x\right)\geq r_0}v(x).
\]

{\rm (vi)} One has
\[
v(x_0)=\mathop{\min}_{\pi\left(x\right)=r_0}v(x).
\]

\end{theorem}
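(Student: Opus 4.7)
The overall plan is to organize the six conditions into three pairs: the ``strong'' pair (i)--(ii), the maximum pair (iii)--(iv), and the minimum pair (v)--(vi), and then close a cycle of implications (i) $\Rightarrow$ (ii) $\Rightarrow$ (iii)\,\&\,(v) $\Rightarrow$ (i), with (iii) $\Leftrightarrow$ (iv) and (v) $\Leftrightarrow$ (vi) established as auxiliary reductions. Throughout, the glue between the ``extremal'' assertions and the ``geometric'' assertion $x_0\in\ef_p^{\left(Q\right)}$ will be supplied by Lemma~\ref{2.2.1}, which characterizes $\ef_p^{\left(Q\right)}$ by the tangency condition $\theta_{x_0}=h_{r_0}$, together with Lemmas~\ref{2.1.20} and~\ref{2.1.30} which convert tangency into separation statements and vice versa.

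For (i)~$\Rightarrow$~(ii), assume $x_0\in\ef_p^{\left(Q\right)}$, so by Lemma~\ref{2.2.1}, {\rm (iii)}, we have $\theta_{x_0}=h_{r_0}$. Then Lemma~\ref{2.1.20}, {\rm (iii)} gives $Q_{\leq a_0}\setminus\{x_0\}\subset h_{r_0}(<)$, which simultaneously yields $\pi(x)\leq r_0=\pi(x_0)$ whenever $v(x)\leq a_0$ (the max-statement) and forces $v(x)\geq a_0$ whenever $\pi(x)\geq r_0$ (since otherwise $x\neq x_0$ lies in $Q_{\leq a_0}\setminus\{x_0\}$ with $\pi(x)\geq r_0$, contradicting the strict inclusion). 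The implications (ii)~$\Rightarrow$~(iii) and (ii)~$\Rightarrow$~(v) are tautological. For (iii)~$\Rightarrow$~(i), the maximum condition rewrites as $Q_{\leq a_0}\subset h_{r_0}(\leq)$, so in particular $Q_{a_0}\subset h_{r_0}(\leq)$; Lemma~\ref{2.1.30}, {\rm (i)} then gives $h_{r_0}=\theta_{x_0}$, and Lemma~\ref{2.2.1}, {\rm (iii)} places $x_0$ in $\ef_p^{\left(Q\right)}$. For (v)~$\Rightarrow$~(i), the minimum condition gives $Q_{<a_0}\subset h_{r_0}(<)$, Lemma~\ref{2.1.30}, {\rm (ii)} upgrades this to $Q_{a_0}\subset h_{r_0}(\leq)$, and the same chain of lemmas yields (i).

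It remains to justify the reductions within each pair. The equivalence (iii)~$\Leftrightarrow$~(iv) is essentially the classical fact that a non-constant linear form on the compact strictly convex body $Q_{\leq a_0}$ attains its maximum only on the boundary $Q_{a_0}$; both sides equal $\pi(x_0)$ because $x_0\in Q_{a_0}\subset Q_{\leq a_0}$. For (v)~$\Leftrightarrow$~(vi), the direction (v)~$\Rightarrow$~(vi) is immediate since $\{\pi=r_0\}\subset\{\pi\geq r_0\}$, and for the converse one uses the homogeneity of $v$: if $r_0>0$ and $\pi(x)=r>r_0$, then $x'=(r_0/r)x$ satisfies $\pi(x')=r_0$ and $v(x')=(r_0/r)^2v(x)<v(x)$, so the infimum over the half-space is already realized on the bounding hyperplane. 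The edge case $r_0=0$ must be handled separately: then $0\in h_{r_0}$ with $v(0)=0$, forcing $x_0=0$, $a_0=0$, and all six conditions collapse to trivialities via Remark~\ref{2.2.10}. This scaling--with--boundary-case bookkeeping, together with the clean application of Lemma~\ref{2.1.30} in both directions, is the only place where genuine care is required; the rest is a direct assembly of the previously established lemmas.
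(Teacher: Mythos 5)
Your proof is correct and takes essentially the same route as the paper: the nontrivial implications {\rm (i)}$\Rightarrow${\rm (ii)}, {\rm (iii)}$\Rightarrow${\rm (i)}, and {\rm (v)}$\Rightarrow${\rm (i)} are handled with the same Lemmas~\ref{2.2.1}, \ref{2.1.20}, and \ref{2.1.30}, and you merely spell out the reductions {\rm (iii)}$\Leftrightarrow${\rm (iv)}, {\rm (v)}$\Leftrightarrow${\rm (vi)} and the $r_0=0$ case that the paper dismisses as straightforward. The one small gloss is that identifying $\theta_{x_0}(<)$ with $h_{r_0}(<)$ in {\rm (i)}$\Rightarrow${\rm (ii)} rests on the positive proportionality $\theta_{x_0}(x)=\frac{a_0}{r_0}\pi(x)$ of Corollary~\ref{2.2.5}, which the paper cites explicitly and you should too.
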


\begin{proof} Below we prove only these implications which are not
straightforward.

If $r_0=0$ and $x_0=x(a_0,0)\in \ef_p^{\left(Q\right)}$, then
$a_0=0$, $x_0=0$, and the equivalences hold. Now, let $r_0>0$. In
particular, we have $x_0\neq 0$.

${\rm (i)}\Longrightarrow {\rm (ii)}$ According to
Lemma~\ref{2.2.1}, {\rm (iii)}, and Corollary~\ref{2.2.5} we have
$x_0\in q_{a_0,r_0}$ and $\theta_{x_0}(x)=\frac{a_0}{r_0}\pi(x)$.
Let us suppose $v\left(x\right)\leq a_0$ for $x\in \hbox{\ccc R}^n$.
Then Lemma~\ref{2.1.20}, {\rm (i)}, imply $\pi(x)\leq r_0$. Now, let
$\pi\left(x\right)\geq r_0$, that is, $\theta_{x_0}(x)\geq a_0$ for
some $x\in \hbox{\ccc R}^n$. In this case Lemma~\ref{2.1.20}, {\rm
(iii)}, yields $v(x)\geq a_0$.

${\rm (iii)}\Longrightarrow {\rm (i)}$ Let $x_0$ satisfies
condition~(\ref{2.2.45}). Lemma~\ref{2.1.30}, {\rm (i)}, imply
$\theta_{x_0}=h_{r_0}$. Now Lemma~\ref{2.2.1}, {\rm (iii)}, finishes
the proof.

${\rm (v)}\Longrightarrow {\rm (i)}$. Since $Q_{<a_0}\subset
h_{r_0}(<)$, Lemma~\ref{2.1.30} yields $\theta_{x_0}=h_{r_0}$. In
accord with Lemma~\ref{2.2.1}, {\rm (iii)}, part {\rm (i)} holds.

\end{proof}

\section{Markowitz Geometry}

\label{3}

In this section we unite the results from the previous two sections
and give complete characterization of the tangent points of a family
of concentric ellipsoids and a family of parallel hyperplanes in an
affine subspace of $\hbox{\ccc R}^n$.

\subsection{The Equality}

\label{3.1}

Let $M\neq\emptyset$, $\ell\in M$, and let us set $L=\{\ell\}$,
$K=M\setminus L$. Let us fix all components of
$\tau^{\left(K\right)}\in\hbox{\ccc R}^K$:
$\tau^{\left(K\right)}=\mu^{\left(K\right)}$, and set
$h^{\left(K\right)}=h(\mu^{\left(K\right)})$,
$\varrho^{\left(K\right)}=c^{\left(Q;K^c\right)}(\mu^{\left(K\right)})$,
$\gamma^{\left(K\right)}=\gamma_K(\mu^{\left(K\right)})$. We denote
$r=\tau_\ell$, $\varrho=\varrho_\ell^{\left(K\right)}$,
$r'=r-\varrho$, so
$\tau^{\left(M\right)}=\tau^{\left(L,K\right)}(r)$. Finally, we set
$a=\gamma_M(\tau^{\left(L,K\right)}(r))$.

We remind that after the translation $z=x-\varrho^{\left(K\right)}$
of the coordinate system, $(z_s)_{s\in K^c}$, where
$z_s=z_s^{\left(K^c\right)}$, is a system of coordinate functions on
the affine subspace $h^{\left(K\right)}$ with origin
$\varrho^{\left(K\right)}$. In this case $h(\tau^{\left(M\right)})=
h(\tau^{\left(L,K\right)}(r))$ is a hyperplane in
$h^{\left(K\right)}$ with equation $z_\ell=r'$. In particular, the
corresponding $\ell$-th coordinate vector $p\in\hbox{\ccc R}^{K^c}$
(the $\ell$-th component of $p$ is $1$ and all other components are
zeroes) is a normal vector of $h(\tau^{\left(L,K\right)}(r))$ in
$h^{\left(K\right)}$. We set $\pi(x)=x_\ell$,
$\pi^{\left(K^c\right)}(z)=z_\ell$, and note that the linear form
$\pi^{\left(K^c\right)}(z)$ is the restriction on
$h^{\left(K\right)}$ of the linear form $\pi(x)$, written in terms
of $z$. It follows from Corollary~\ref{1.5.55}, {\rm (i)}, that the
trace $q_{a,\mu^{\left(K\right)}}$ of the ellipsoid $Q_a$ on affine
space $h^{\left(K\right)}$ is nonempty and the hyperplane
$h(\tau^{\left(L,K\right)}(r))$ is tangential to the ellipsoid
$q_{a,\mu^{\left(K\right)}}$ at the point
$c^{\left(Q;M^c\right)}(\tau^{\left(L,K\right)}(r))$.

In order to stick together notation from sections~\ref{1}
and~\ref{2} in this case, we set $a'=a-\gamma^{\left(K\right)}$,
$h(\tau^{\left(L,K\right)}(r))=h_{r'}$,
$q_{a',r'}=q_{a,\mu^{\left(K\right)}}\cap
h_{r'}=Q_{a'}^{\left(K^c\right)}\cap h_{r'}$.

\begin{theorem}\label{3.1.1} {\rm (i)} If $r'\geq 0$, then
\begin{equation}
x(a',r')=c^{\left(Q;M^c\right)}(\tau^{\left(L,K\right)}(r))
\label{3.1.5}
\end{equation}
and $x(0,0)=\varrho^{\left(K\right)}$.

{\rm (ii)} One has
\[
\eff_{\ell^{\varrho+}}^{\left(Q;M^c\right)}=
\ef_p^{\left(Q^{\left(K^c\right)}\right)}.
\]

\end{theorem}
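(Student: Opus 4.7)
The plan is to reduce the $M$-level problem to a $K$-level translation plus an $L$-level problem via Corollary~\ref{1.5.55}, and then match the $L$-level picture, in which $L=\{\ell\}$ is a singleton, with the one-parameter setting of Section~\ref{2} through Lemma~\ref{2.2.1}.

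For part (i), I would first invoke Corollary~\ref{1.5.55}, (ii): since $\tau^{\left(L\right)}-c_L^{\left(Q;K^c\right)}(\mu^{\left(K\right)})=r-\varrho=r'$, that corollary yields
\[
c^{\left(Q;M^c\right)}(\tau^{\left(L,K\right)}(r))=\varrho^{\left(K\right)}+
c^{\left(Q^{\left(K^c\right)};M^c\right)}(r').
\]
Thus, after translating the origin to $\varrho^{\left(K\right)}$, the claim~(\ref{3.1.5}) reduces to the identity $c^{\left(Q^{\left(K^c\right)};M^c\right)}(r')=x(a',r')$ inside the ambient space $\hbox{\ccc R}^{K^c}$ equipped with the positive definite form $Q^{\left(K^c\right)}$ and the linear form $\pi^{\left(K^c\right)}(z)=z_\ell$ whose coefficient vector is $p$. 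Next, Corollary~\ref{1.5.55}, (iii), forces $a'=a-\gamma^{\left(K\right)}=\gamma_L^{\left(Q^{\left(K^c\right)}\right)}(r')$, so Theorem~\ref{1.5.20}, (iii), applied inside $\hbox{\ccc R}^{K^c}$ with the singleton index set $L=\{\ell\}$, gives $q_{a',r'}=\{c^{\left(Q^{\left(K^c\right)};M^c\right)}(r')\}$. The implication (iv)$\Rightarrow$(i) of Lemma~\ref{2.2.1} identifies this unique point with $x(a',r')\in\ef_p^{\left(Q^{\left(K^c\right)}\right)}$, settling (i) for $r'>0$. The boundary case $r'=0$ forces $a'=0$, hence $c^{\left(Q^{\left(K^c\right)};M^c\right)}(0)=0$ and $x(0,0)=0$ in $z$-coordinates, which is $\varrho^{\left(K\right)}$ in the ambient coordinates.

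Part (ii) is then a matter of inspecting the parametrizations: as $r$ runs over $[\varrho,\infty)$ the parameter $r'=r-\varrho$ traverses $[0,\infty)$, and the relation $a'=\gamma_L^{\left(Q^{\left(K^c\right)}\right)}(r')$ agrees with the defining relation $a'=c_p(r')^2$ of $E_p^{\left(Q^{\left(K^c\right)}\right)}$, since by Corollary~\ref{1.5.50}, (ii), applied intrinsically in $h^{\left(K\right)}\cong\hbox{\ccc R}^{K^c}$, both quantities equal the squared $Q^{\left(K^c\right)}$-distance from the origin to $h_{r'}$. Part (i) therefore converts the parametrization $r\mapsto c^{\left(Q;M^c\right)}(\tau^{\left(L,K\right)}(r))$ bijectively into the parametrization $(a',r')\mapsto x(a',r')$, and the two sets coincide.

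The principal obstacle is bookkeeping of the two nested coordinate changes --- from the ambient $\hbox{\ccc R}^n$ down to $h^{\left(K\right)}\cong\hbox{\ccc R}^{K^c}$ via translation by $\varrho^{\left(K\right)}$, and then from the $M$-problem on $h^{\left(K\right)}$ to the one-parameter ``efficient line'' through its origin --- along with the elementary check that $(Q^{\left(K^c\right)})^{\left(M^c\right)}=Q^{\left(M^c\right)}$ (valid because $M^c\subset K^c$). Once that bookkeeping is in order, the cascade Corollary~\ref{1.5.55} $\to$ Theorem~\ref{1.5.20} $\to$ Lemma~\ref{2.2.1} runs mechanically.
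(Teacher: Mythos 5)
Your proposal is correct and takes essentially the same route as the paper: both reduce the problem to the translated picture on $h^{\left(K\right)}$ via Corollary~\ref{1.5.55}, identify the singleton intersection $q_{a',r'}$, and then use Lemma~\ref{2.2.1} to recognize that point as $x(a',r')$, treating $r'=0$ separately. The only cosmetic difference is that you reach the singleton through parts (ii)--(iii) of Corollary~\ref{1.5.55} together with Theorem~\ref{1.5.20}, (iii), whereas the paper invokes the tangency statement coming from part (i) of that corollary and obtains $a'=c_p r'^2$ directly from Lemma~\ref{2.2.1}, (ii).
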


\begin{proof} {\rm (i)} The affine space
$h(\tau^{\left(L,K\right)}(r))$ is a hyperplane in
$h^{\left(K\right)}$, which is tangential to the ellipsoid
$q_{a,\mu^{\left(K\right)}}$ at the point
$c^{\left(Q;M^c\right)}(\tau^{\left(L,K\right)}(r))$. In particular,
$Q_{a'}^{\left(K^c\right)}\cap h_{r'}=
\{c^{\left(Q;M^c\right)}(\tau^{\left(L,K\right)}(r))\}$ and
Lemma~\ref{2.2.1}, {\rm (ii)}, yields $a'=c_p{r'}^2$ for
$c_p=({}^t\!pQ^{-1}p)^{-1}$. Therefore, when $r'\geq 0$, we have
$(a',r')\in E_p^{\left(Q\right)}$ and the equality~(\ref{3.1.5})
holds. In addition, if $r'=0$, then $a'=0$,
$\gamma_M(\tau^{\left(L,K\right)}(r))=\gamma^{\left(K\right)}$, and
Corollary~\ref{1.5.55}, {\rm (ii)}, {\rm (iii)}, implies
$\gamma_L^{\left(Q^{\left(K^c\right)}\right)}(\tau^{\left(L\right)}-
c_L^{\left(Q;K^c\right)}(\mu^{\left(K\right)}))=0$, hence
\[
c^{\left(Q^{\left(K^c\right)};M^c\right)}((\tau^{\left(L\right)}-
c_L^{\left(Q;K^c\right)}(\mu^{\left(K\right)})))=0.
\]
In other words,
\[
c^{\left(Q;M^c\right)}(\tau^{\left(L,K\right)}(\varrho))=
c^{\left(Q;K^c\right)}(\mu^{\left(K\right)}).
\]
This shows that $x(0,0)=c^{\left(Q;K^c\right)}
(\mu^{\left(K\right)})=\varrho^{\left(K\right)}$ and the
equality~(\ref{3.1.5}) proves part {\rm (i)} which, in turn, yields
part {\rm (ii)}.

\end{proof}

\begin{theorem}\label{3.1.10}  Let $x_0=
c^{\left(Q;M^c\right)}(\tau^{\left(L,K\right)}(r_0))\in
\eff_{\ell^{\varrho+}}^{\left(Q;M^c\right)}$. One has $r_0=\pi(x_0)$
and if $a_0=v(x_0)$, then
\begin{equation}
\pi(x_0)=\max_{x\in h^{\left(K\right)}, v\left(x\right)\leq
a_0}\pi(x) \hbox{\rm\ and\ } v(x_0)=\mathop{\min}_{x\in
h^{\left(K\right)}, \pi\left(x\right)\geq r_0}v(x).\label{3.1.15}
\end{equation}

\end{theorem}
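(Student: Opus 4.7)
The plan is to bootstrap Theorem~\ref{2.2.40} from the ambient $\hbox{\ccc R}^n$ setting up to the affine subspace $h^{\left(K\right)}$ by the coordinate change already set up in section~\ref{3.1}. Concretely, the substitution $z = x-\varrho^{\left(K\right)}$ identifies $h^{\left(K\right)}$ with the linear space $\hbox{\ccc R}^{K^c}$ equipped with the positive definite quadratic form $v^{\left(K^c\right)}(z)$ (coming from the principal submatrix $Q^{\left(K^c\right)}$) and the linear coordinate form $\pi^{\left(K^c\right)}(z)=z_\ell$. The key identities are that, on $h^{\left(K\right)}$, one has $v(x)=v^{\left(K^c\right)}(z)+\gamma^{\left(K\right)}$ (from the discussion following~(\ref{1.1.65})) and $\pi(x)=x_\ell=z_\ell+\varrho=\pi^{\left(K^c\right)}(z)+\varrho$. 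These two relations are additive constants, so sublevel and superlevel sets match perfectly, and taking $\max$/$\min$ over $x\in h^{\left(K\right)}$ against $v(x)\leq a_0$, $\pi(x)\geq r_0$ is equivalent to taking them over $z\in\hbox{\ccc R}^{K^c}$ against $v^{\left(K^c\right)}(z)\leq a_0'$, $\pi^{\left(K^c\right)}(z)\geq r_0'$, where $a_0'=a_0-\gamma^{\left(K\right)}$ and $r_0'=r_0-\varrho$.

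Next, I would use Theorem~\ref{3.1.1}, part {\rm (ii)}, which identifies the ray $\eff_{\ell^{\varrho+}}^{\left(Q;M^c\right)}$ with $\ef_p^{\left(Q^{\left(K^c\right)}\right)}$; so the hypothesis $x_0\in\eff_{\ell^{\varrho+}}^{\left(Q;M^c\right)}$ places $z_0=x_0-\varrho^{\left(K\right)}$ in $\ef_p^{\left(Q^{\left(K^c\right)}\right)}$. Theorem~\ref{3.1.1}, part {\rm (i)}, shows that this $z_0$ corresponds to the parameter pair $(a_0',r_0')\in E_p^{\left(Q^{\left(K^c\right)}\right)}$, and the assertion $r_0=\pi(x_0)$ is immediate from $\pi(x_0)=z_{0,\ell}+\varrho=r_0'+\varrho=r_0$.

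The main step is now to apply Theorem~\ref{2.2.40}, {\rm (i)}$\Longrightarrow${\rm (ii)}, to $z_0\in\ef_p^{\left(Q^{\left(K^c\right)}\right)}$ inside $\hbox{\ccc R}^{K^c}$ equipped with $v^{\left(K^c\right)}$ and $\pi^{\left(K^c\right)}$. This yields
\[
\pi^{\left(K^c\right)}(z_0)=\max_{v^{\left(K^c\right)}\left(z\right)\leq a_0'}\pi^{\left(K^c\right)}(z)\hbox{\ and\ }v^{\left(K^c\right)}(z_0)=\mathop{\min}_{\pi^{\left(K^c\right)}\left(z\right)\geq r_0'}v^{\left(K^c\right)}(z).
\]
Translating back via $x=z+\varrho^{\left(K\right)}$ and using the two additive-constant identities noted above, the domains $\{v^{\left(K^c\right)}(z)\leq a_0'\}$ and $\{\pi^{\left(K^c\right)}(z)\geq r_0'\}$ become $\{x\in h^{\left(K\right)}\mid v(x)\leq a_0\}$ and $\{x\in h^{\left(K\right)}\mid \pi(x)\geq r_0\}$, and the extremal values $\pi^{\left(K^c\right)}(z_0)$, $v^{\left(K^c\right)}(z_0)$ become $\pi(x_0)-\varrho$, $v(x_0)-\gamma^{\left(K\right)}$; the asserted equalities~(\ref{3.1.15}) then follow.

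I expect no serious obstacle: Theorem~\ref{2.2.40} already supplies the hard part (the unconstrained $\max$/$\min$ characterization for an efficient vector in a linear ambient space), and Theorem~\ref{3.1.1} supplies the identification of the two efficient loci. The only care needed is pedantic bookkeeping of the additive constants $\gamma^{\left(K\right)}$ and $\varrho$ to confirm that the optimization over $h^{\left(K\right)}$ with constraints $v(x)\leq a_0$, $\pi(x)\geq r_0$ is genuinely the same problem as the one solved by Theorem~\ref{2.2.40} in the intrinsic $z$-coordinates of $h^{\left(K\right)}$, and to handle the degenerate edge case $r_0'=0$ (where $x_0=\varrho^{\left(K\right)}$) separately, which drops out of Theorem~\ref{3.1.1}, {\rm (i)}.
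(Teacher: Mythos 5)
Your proposal is correct and follows essentially the same route as the paper: invoke Theorem~\ref{3.1.1} to identify $x_0$ (in the intrinsic $z$-coordinates of $h^{\left(K\right)}$) with a point of $\ef_p^{\left(Q^{\left(K^c\right)}\right)}$, apply Theorem~\ref{2.2.40}, {\rm (i)}$\Rightarrow${\rm (ii)}, in that setting, and translate back using $\pi(x)=\pi^{\left(K^c\right)}(z)+\varrho$ and $v(x)=v^{\left(K^c\right)}(z)+\gamma^{\left(K\right)}$ with $r_0'=r_0-\varrho$, $a_0'=a_0-\gamma^{\left(K\right)}$. The only difference is cosmetic bookkeeping (e.g.\ your separate remark on the edge case $r_0'=0$), which the paper absorbs into the same argument.
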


\begin{proof} According to Theorem~\ref{3.1.1}, we have
$r'_0=r_0-\varrho\geq 0$, hence
$x_0=x(a_0,r_0)\in\ef_p^{\left(Q^{\left(K^c\right)}\right)}$. Let
$x_0=z_0+\varrho^{\left(K\right)}$. Theorem~\ref{2.2.40}, {\rm (i)},
{\rm (ii)}, implies
\[
\pi^{\left(K^c\right)}(z_0)= \max_{z\in h^{\left(K\right)},
v^{\left(K^c\right)}\left(z\right)\leq
a'_0}\pi^{\left(K^c\right)}(z)
\]
and
\[
v^{\left(K^c\right)}(z_0)= \mathop{\min}_{z\in h^{\left(K\right)},
\pi^{\left(K^c\right)}\left(z\right)\geq
r'_0}v^{\left(K^c\right)}(z).
\]
Since $\pi^{\left(K^c\right)}(x)=\pi(z)+\varrho$,
$v(x)=v^{\left(K^c\right)}(z)+\gamma^{\left(K\right)}$ on
$h^{\left(K\right)}$, and since $r'_0=r_0-\varrho$,
$a'_0=a_0-\gamma^{\left(K\right)}$, we establish the extremal
property~(\ref{3.1.15}).

\end{proof}

\subsection{The Interpretation}

\label{3.5}

Let $k$, $m$, and $n$ be integers with $n\geq 2$, $0\leq k<n-1$,
$m=k+1$, and let $M=\{n-k,n-k+1,\ldots,n\}$, $K=\{n-k+1,\ldots,n\}$,
$L=\{n-k\}$. Let $h^{\left(j\right)}\colon
\pi^{\left(j\right)}(y)=\tau_j,\hbox{\ }j\in M$, be linearly
independent affine hyperplanes in $\hbox{\ccc R}^n$. We fix
$h^{\left(n\right)}\colon y_1+\cdots+y_n=1$, so $\tau_n=1$, and
denote this hyperplane by $\Pi$. Since $\pi^{\left(j\right)}(y)$ are
linearly independent linear forms, we can change the coordinates in
$\hbox{\ccc R}^n$: $y=Ax$, in such a way that the hyperplane
$h^{\left(j\right)}$ has equation $x_j=\tau_j$, $j\in M$, and,
moreover, $x_i=y_i$, $i\in [n]\setminus M$.

We fix $\tau^{\left(K\right)}$:
$\tau^{\left(K\right)}=\mu^{\left(K\right)}$ ($\mu_n=1$), and
interpret $h^{\left(n\right)}=\Pi$ as the hyperplane consisting of
all {\it financial portfolios} with $n$ assets (here $y_s$ is the
relative amount of money invested in the $s$-th asset,
$s=1,\ldots,n$). The affine subspace
$h^{\left(n-k+1\right)}\cap\ldots\cap h^{\left(n-1\right)}$ (which
is equal to $ \hbox{\ccc R}^n$ if $m=2$) represents several
additional {\it linear constrain conditions} and its trace on $\Pi$
is the affine space $C=h^{\left(K\right)}=h^{\left(n-k+1\right)}\cap
h^{\left(n-1\right)}\cap\ldots\cap\Pi $ of {\it linear constrain
conditions on $\Pi$}.

We denote $\ell=n-k$, $\pi^{\left(\ell\right)}(y)=\pi(y)$ and let
$r=\tau_{\ell}$ be variable. When the coefficient in front of $y_s$
in the linear form $\pi(y)$ is the expected return on $s$-th asset,
$s=1,\ldots,n$, the trace of the hyperplane
$h=h^{\left(\ell\right)}$, $h\colon\pi(y)=r$, on $\Pi$ may be
interpreted as the set of all financial portfolios with {\it
expected return} $r$. Moreover, the trace of the hyperplane $h$ on
$C$ may be interpreted as the set of all financial portfolios with
{\it expected return} $r$, that obey the above linear constrain
conditions on $\Pi$.

On the other hand, if $v(x)={}^t\!xQx$, where ${}^t\!A^{-1}QA^{-1}$
is the $n\times n$ covariance matrix produced by the expected
returns of the individual assets, we may interpret $v(x)$ as the
risk of the portfolio $x$. Theorem~\ref{3.1.10} yields that the ray
$E=\eff_{\ell^{\varrho+}}^{\left(Q;M^c\right)}$ with endpoint
$\varrho^{\left(K\right)}$ is the locus of all Markowitz efficient
portfolios which satisfy the linear constraint conditions $C$. It
turns out that the value $v(\varrho^{\left(K\right)})$ is the
absolute minimum of the risk and in terms of $x$-coordinates the
$\ell$-th component of $\varrho^{\left(K\right)}$ is the absolute
minimum of the corresponding expected return $r$ under the given
constrains.

In order to relate this approach to the classical one, we have to
study the intersection $E\cap\Delta$, where $\Delta$ is the trace of
the unit simplex in $\Pi$ on $C$, because the members of
$E\cap\Delta$ are the efficient portfolios that have no short sales.
Moreover, the properties of this intersection characterize the
financial market.

\appendix

\section{Appendix}

\label{A}

In this appendix we use freely notation introduced in the main body
of the paper.

\subsection{Three Lemmas}

\label{A.1}

The partition $M^c\cup M=[n]$ of the set of indices $[n]$ produces
the following partitioned matrices: Any vector
$x={}^t(x_1,\ldots,x_n)\in \hbox{\ccc R}^n$ can be visualized as
$x={}^t(x^{\left(M^c\right)},x^{\left(M\right)})$ and any $n\times
n$-matrix $Q$ can be visualized as
\[
\left(
\begin{array}{cccccccccccccccccc}
Q^{\left(M^c\right)} & Q^{\left(M^c\times M\right)} \\
Q^{\left(M\times M^c\right)} & Q^{\left(M\right)} \\
 \end{array}
\right).
\]

\begin{lemma} \label{A.1.1} Let $Q$ be a symmetric $n\times n$-matrix
and let $v(x)={}^txQx$ be the corresponding quadratic form. One has
\[
v(x)={}^tx^{\left(M^c\right)}Q^{\left(M^c\right)}x^{\left(M^c\right)}+
2{}^tx^{\left(M^c\right)}Q^{\left(M^c\times
M\right)}x^{\left(M\right)}+
{}^tx^{\left(M\right)}Q^{\left(M\right)}x^{\left(M\right)}.
\]

\end{lemma}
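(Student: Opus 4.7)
The plan is to reduce this to a straightforward block-matrix calculation. Since the expression ${}^t x Q x$ is invariant under any simultaneous permutation of the rows and columns of $Q$ together with the corresponding permutation of the entries of $x$, I may assume (after reindexing) that the indices in $M^c$ come first and those in $M$ second. In this ordering the vector $x$ takes the block form $x = {}^t(x^{(M^c)}, x^{(M)})$ and the matrix $Q$ takes the $2 \times 2$ block form displayed just before the lemma.

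First I would compute the block product
\[
Qx = {}^t\bigl( Q^{(M^c)} x^{(M^c)} + Q^{(M^c \times M)} x^{(M)},\ Q^{(M \times M^c)} x^{(M^c)} + Q^{(M)} x^{(M)} \bigr),
\]
and then multiply on the left by ${}^t x = ({}^t x^{(M^c)}, {}^t x^{(M)})$. This yields four scalar terms:
\[
{}^t x^{(M^c)} Q^{(M^c)} x^{(M^c)}, \quad {}^t x^{(M^c)} Q^{(M^c \times M)} x^{(M)},
\]
\[
{}^t x^{(M)} Q^{(M \times M^c)} x^{(M^c)}, \quad {}^t x^{(M)} Q^{(M)} x^{(M)}.
\]

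Next I would use the symmetry hypothesis ${}^t Q = Q$, which gives $Q^{(M \times M^c)} = {}^t Q^{(M^c \times M)}$. Combined with the fact that each cross term above is a scalar (hence equal to its own transpose), this shows
\[
{}^t x^{(M)} Q^{(M \times M^c)} x^{(M^c)} = {}^t\!\bigl({}^t x^{(M^c)} Q^{(M^c \times M)} x^{(M)}\bigr) = {}^t x^{(M^c)} Q^{(M^c \times M)} x^{(M)},
\]
so the two cross terms coincide and contribute a factor of $2$. Summing the four terms gives the asserted identity.

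There is no real obstacle here: the argument is bookkeeping with partitioned matrices plus the transpose trick for scalars. The only thing to be slightly careful about is the reindexing at the start, which does not affect the quadratic form since it corresponds to conjugating $Q$ by a permutation matrix $P$ (so that $v(x) = {}^t(Px)(PQ{}^tP)(Px)$) and relabelling the entries of $x$ accordingly.
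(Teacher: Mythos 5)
Your proposal is correct and follows essentially the same route as the paper: expand ${}^t x Q x$ in the partitioned block form and use the symmetry $Q^{(M\times M^c)}={}^tQ^{(M^c\times M)}$ (together with the scalar-transpose observation) to merge the two cross terms into the factor of $2$. The only addition is your explicit remark about reindexing via a permutation matrix, which the paper handles implicitly by "visualizing" $x$ and $Q$ in partitioned form.
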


\begin{proof} We have
\[
v(x)={}^txQx=({}^tx^{\left(M^c\right)},{}^tx^{\left(M\right)})\left(
\begin{array}{cccccccccccccccccc}
Q^{\left(M^c\right)} & Q^{\left(M^c\times M\right)} \\
{}^tQ^{\left(M^c\times M\right)} & Q^{\left(M\right)} \\
 \end{array}
\right){}^t(x^{\left(M^c\right)},x^{\left(M\right)})=
\]
\[
{}^tx^{\left(M^c\right)}Q^{\left(M^c\right)}x^{\left(M^c\right)}+
2{}^tx^{\left(M^c\right)}Q^{\left(M^c\times
M\right)}x^{\left(M\right)}+
{}^tx^{\left(M\right)}Q^{\left(M\right)}x^{\left(M\right)}.
\]

\end{proof}

Below we assume that $Q^{\left(M^c\right)}$ is an invertible matrix.

\begin{lemma}\label{A.1.5} Let
\[
c_{M^c}^{\left(M^c\right)}(x^{\left(M\right)})=
-(Q^{\left(M^c\right)})^{-1}Q^{\left(M^c\times
M\right)}x^{\left(M\right)}, \hbox{\ }
c^{\left(M^c\right)}(x^{\left(M\right)})=
{}^t(c_{M^c}^{\left(M^c\right)}(x^{\left(M\right)}),x^{\left(M\right)}),
\]
\[
\hbox{\rm and let\ }\gamma_M(x^{\left(M\right)})=
-{}^tc_{M^c}^{\left(M^c\right)}(x^{\left(M\right)})Q^{\left(M^c\right)}
c_{M^c}^{\left(M^c\right)}(x^{\left(M\right)})+{}^tx^{\left(M\right)}
Q^{\left(M\right)}x^{\left(M\right)}.
\]
{\rm (i)} $\gamma_M(x^{\left(M\right)})$ is a quadratic form in
$x^{\left(M\right)}$,
\[
\gamma_M(x^{\left(M\right)})=
{}^tx^{\left(M\right)}[Q^{\left(M\right)}-{}^tQ^{\left(M^c\times
M\right)}(Q^{\left(M^c\right)})^{-1}Q^{\left(M^c\times M\right)}
]x^{\left(M\right)},
\]
and one has $\gamma_M(x^{\left(M\right)})=
v(c^{\left(M^c\right)}(x^{\left(M\right)}))$.

{\rm (ii)} If $v(x)$ is a positive definite quadratic form in $x$,
then $\gamma_M(x^{\left(M\right)})$ is a positive definite quadratic
form in $x^{\left(M\right)}$.

\end{lemma}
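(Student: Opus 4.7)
The plan is to reduce everything to a direct computation using Lemma~\ref{A.1.1} and the definition of $c_{M^c}^{(M^c)}(x^{(M)})$. Since $Q^{(M^c)}$ is symmetric and invertible, its inverse is also symmetric, which is the algebraic fact that drives the whole argument.

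\textbf{Part (i).} First I would substitute $c_{M^c}^{(M^c)}(x^{(M)}) = -(Q^{(M^c)})^{-1} Q^{(M^c\times M)} x^{(M)}$ directly into the defining expression of $\gamma_M(x^{(M)})$. The first summand becomes
\[
-{}^tc_{M^c}^{(M^c)}(x^{(M)})\, Q^{(M^c)}\, c_{M^c}^{(M^c)}(x^{(M)}) = -{}^tx^{(M)}\,{}^tQ^{(M^c\times M)}\,(Q^{(M^c)})^{-1}\,Q^{(M^c\times M)}\,x^{(M)},
\]
using symmetry of $(Q^{(M^c)})^{-1}$ to cancel one of the two inverses against the middle factor $Q^{(M^c)}$. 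Adding ${}^tx^{(M)} Q^{(M)} x^{(M)}$ yields exactly the bracketed expression claimed in the statement, which is manifestly a quadratic form in $x^{(M)}$. For the identity $\gamma_M(x^{(M)}) = v(c^{(M^c)}(x^{(M)}))$, I would apply Lemma~\ref{A.1.1} to $v(c^{(M^c)}(x^{(M)}))$, noting that the bottom block of $c^{(M^c)}(x^{(M)})$ is $x^{(M)}$ itself. This gives three terms; the crossed term $2\,{}^tc_{M^c}^{(M^c)}(x^{(M)})\, Q^{(M^c\times M)}\, x^{(M)}$ evaluates to $-2\,{}^tx^{(M)}\,{}^tQ^{(M^c\times M)}(Q^{(M^c)})^{-1}Q^{(M^c\times M)} x^{(M)}$, i.e.\ twice the quadratic form in the first summand above. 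Combining the three terms thus collapses the sum to the bracketed form derived earlier, proving the identity.

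\textbf{Part (ii).} Suppose $v$ is positive definite on $\hbox{\ccc R}^n$. Given $x^{(M)}\neq 0$ in $\hbox{\ccc R}^M$, the vector $c^{(M^c)}(x^{(M)})\in\hbox{\ccc R}^n$ has its $M$-block equal to $x^{(M)}\neq 0$, hence $c^{(M^c)}(x^{(M)})\neq 0$. By positive definiteness of $v$ we then get $v(c^{(M^c)}(x^{(M)})) > 0$, and by part (i) this is $\gamma_M(x^{(M)})$. Since $\gamma_M$ is quadratic by part (i) and strictly positive off zero, it is positive definite.

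\textbf{Expected obstacle.} The only nontrivial point is the sign and coefficient bookkeeping in part (i): one has to make sure that the factor $2$ in the cross-term of Lemma~\ref{A.1.1} combines correctly with the minus sign from the definition of $c_{M^c}^{(M^c)}$ to convert a ``$+X+2\cdot(-X)$'' pattern into ``$-X$'', where $X = {}^tx^{(M)}\,{}^tQ^{(M^c\times M)}(Q^{(M^c)})^{-1}Q^{(M^c\times M)} x^{(M)}$. Once that cancellation is done, everything else is formal, and part (ii) is immediate from part (i).
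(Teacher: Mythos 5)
Your proposal is correct and follows essentially the same route as the paper: the bracketed expression is obtained by substituting the formula for $c_{M^c}^{(M^c)}(x^{(M)})$ and using symmetry of $(Q^{(M^c)})^{-1}$, the identity $\gamma_M(x^{(M)})=v(c^{(M^c)}(x^{(M)}))$ comes from Lemma~\ref{A.1.1} with the cross-term cancellation you describe, and part (ii) is exactly the paper's observation that $c^{(M^c)}(x^{(M)})=0$ if and only if $x^{(M)}=0$. No gaps.
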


\begin{proof} {\rm (i)} We begin by noting that since
\[
{}^tc_{M^c}^{\left(M^c\right)}(x^{\left(M\right)})Q^{\left(M^c\right)}
c_{M^c}^{\left(M^c\right)}(x^{\left(M\right)})=
{}^tx^{\left(M\right)}{}^tQ^{\left(M^c\times
M\right)}(Q^{\left(M^c\right)})^{-1} Q^{M^c\times
M}x^{\left(M\right)},
\]
we obtain the above expression for $\gamma_M(x^{\left(M\right)})$.
On the other hand, Lemma~\ref{A.1.1} implies
\[
v(c^{\left(M^c\right)}(x^{\left(M\right)}))=
\]
\[
{}^tc_{M^c}^{\left(M^c\right)}(x^{\left(M\right)})
Q^{\left(M^c\right)}c_{M^c}^{\left(M^c\right)}(x^{\left(M\right)})+
2{}^tc_{M^c}^{\left(M^c\right)}(x^{\left(M\right)})Q^{\left(M^c\times
M\right)}x^{\left(M\right)}+
{}^tx^{\left(M\right)}Q^{\left(M\right)}x^{\left(M\right)}.
\]
Taking into account that $Q^{\left(M^c\times
M\right)}x^{\left(M\right)}=
-Q^{\left(M^c\right)}c_{M^c}^{\left(M^c\right)}
(x^{\left(M\right)})$, we establish the identity.

{\rm (ii)} In is enough to note that
$c^{\left(M^c\right)}(x^{\left(M\right)})=0$ if and only if
$x^{\left(M\right)}=0$.

\end{proof}

Now, let us translate the system of coordinates by the rule
\[
z(\tau^{\left(M\right)})=x-
c^{\left(M^c\right)}(\tau^{\left(M\right)}).
\]

\begin{lemma}\label{A.1.10} If $x^{\left(M\right)}=
\tau^{\left(M\right)}$, then
\[
v(x)={}^tz^{\left(M^c\right)}Q^{\left(M^c\right)}
z^{\left(M^c\right)}+ \gamma_M(\tau^{\left(M\right)}).
\]

\end{lemma}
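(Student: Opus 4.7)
The hypothesis $x^{(M)} = \tau^{(M)}$ says exactly that $z^{(M)} = 0$, where $z = z(\tau^{(M)}) = x - c^{(M^c)}(\tau^{(M)})$. Writing $c = c^{(M^c)}(\tau^{(M)})$ for brevity, my plan is to expand $v(x) = v(z + c)$ by the bilinearity of the form associated with $Q$,
\[
v(x) = v(z) + 2\,{}^t z\, Q\, c + v(c),
\]
and then evaluate the three summands separately, invoking the two lemmas already established in this subsection.

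For $v(c)$, Lemma~\ref{A.1.5}{\rm (i)} directly asserts $v(c^{(M^c)}(\tau^{(M)})) = \gamma_M(\tau^{(M)})$. For $v(z)$, I would apply Lemma~\ref{A.1.1} with $z$ in place of $x$: since $z^{(M)} = 0$, the second and third block summands in that lemma vanish, leaving $v(z) = {}^t z^{(M^c)} Q^{(M^c)} z^{(M^c)}$, which is precisely the first term in the target identity.

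The crux of the proof is the vanishing of the cross term. Applying the same block decomposition to the bilinear form $(w_1,w_2)\mapsto {}^t w_1 Q w_2$, and using $z^{(M)} = 0$ together with $c^{(M)} = \tau^{(M)}$, I get
\[
{}^t z\, Q\, c = {}^t z^{(M^c)} \bigl[Q^{(M^c)} c_{M^c}^{(M^c)}(\tau^{(M)}) + Q^{(M^c \times M)} \tau^{(M)}\bigr].
\]
The bracket is identically zero because $c_{M^c}^{(M^c)}(\tau^{(M)})$ is, by its defining formula in Lemma~\ref{A.1.5}, the solution of $Q^{(M^c)} y = -Q^{(M^c \times M)} \tau^{(M)}$. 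Substituting the three evaluations into the expansion of $v(x)$ then yields the stated identity. No step is technically hard, the whole proof is essentially bookkeeping on partitioned matrices; conceptually, the only point worth flagging is that the translation by $c^{(M^c)}(\tau^{(M)})$ is precisely the $Q$-orthogonal decomposition of $x$ into the foot on $h(\tau^{(M)})$ and the fibre coordinate $z^{(M^c)}$, and this orthogonality is built into the very definition of $c_{M^c}^{(M^c)}(\tau^{(M)})$.
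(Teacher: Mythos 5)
Your proof is correct. It differs from the paper's in organization rather than in substance, but the difference is worth noting. The paper applies Lemma~\ref{A.1.1} to $x$ itself (with $x^{(M)}=\tau^{(M)}$), rewrites $Q^{(M^c\times M)}\tau^{(M)}$ as $-Q^{(M^c)}c_{M^c}^{(M^c)}(\tau^{(M)})$, substitutes $x^{(M^c)}=z^{(M^c)}+c_{M^c}^{(M^c)}(\tau^{(M)})$, and expands until the cross terms cancel, finally recognizing $\gamma_M(\tau^{(M)})$ from its defining formula in Lemma~\ref{A.1.5}. You instead expand $v(z+c)$ by bilinearity into $v(z)+2\,{}^t z\,Q\,c+v(c)$ and evaluate the three pieces separately: $v(c)=\gamma_M(\tau^{(M)})$ comes ready-made from the identity in Lemma~\ref{A.1.5}{\rm (i)} (which the paper's proof of this lemma does not invoke, re-deriving it implicitly), $v(z)$ reduces via Lemma~\ref{A.1.1} because $z^{(M)}=0$, and the cross term dies because $Q^{(M^c)}c_{M^c}^{(M^c)}(\tau^{(M)})=-Q^{(M^c\times M)}\tau^{(M)}$ by the defining equation of $c_{M^c}^{(M^c)}$. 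What your route buys is conceptual transparency: it isolates the cancellation the paper performs term-by-term as a single statement of $Q$-orthogonality of the fibre direction $z$ (with $z^{(M)}=0$) to the foot $c^{(M^c)}(\tau^{(M)})$, consistent with Corollary~\ref{1.5.50}{\rm (ii)}; what the paper's route buys is a computation that depends only on Lemma~\ref{A.1.1} and the definition of $\gamma_M$, without leaning on the identity $v(c^{(M^c)}(\tau^{(M)}))=\gamma_M(\tau^{(M)})$.
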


\begin{proof} In accord with Lemma~\ref{A.1.1}, we have
\[
v(x)={}^tx^{\left(M^c\right)}Q^{\left(M^c\right)}x^{\left(M^c\right)}+
2{}^tx^{\left(M^c\right)}Q^{\left(M^c\times
M\right)}\tau^{\left(M\right)}+
{}^t\tau^{\left(M\right)}Q^{\left(M\right)}\tau^{\left(M\right)}=
\]
\[
{}^tx^{\left(M^c\right)}Q^{\left(M^c\right)}x^{\left(M^c\right)}
-2{}^tx^{\left(M^c\right)}Q^{\left(M^c\right)}
c_{M^c}^{\left(M^c\right)}(\tau^{\left(M\right)})+
{}^t\tau^{\left(M\right)}Q^{\left(M\right)}\tau^{\left(M\right)}=
\]
\[
{}^t(z^{\left(M^c\right)}+
c_{M^c}^{\left(M^c\right)}(\tau^{\left(M\right)}))
Q^{\left(M^c\right)}(z^{\left(M^c\right)}+
c_{M^c}^{\left(M^c\right)}(\tau^{\left(M\right)}))
\]
\[
-2{}^t(z^{\left(M^c\right)}+
c_{M^c}^{\left(M^c\right)}(\tau^{\left(M\right)}))Q^{\left(M^c\right)}
c_{M^c}^{\left(M^c\right)}(\tau^{\left(M\right)})+
{}^t\tau^{\left(M\right)}Q^{\left(M\right)}\tau^{\left(M\right)}=
\]
\[
{}^tz^{\left(M^c\right)}Q^{\left(M^c\right)}z^{\left(M^c\right)}+
{}^tc_{M^c}^{\left(M^c\right)}Q^{\left(M^c\right)}
c_{M^c}^{\left(M^c\right)}+2{}^tz^{\left(M^c\right)}Q^{\left(M^c\right)}
c_{M^c}^{\left(M^c\right)}-
\]
\[
2{}^tz^{\left(M^c\right)}Q^{\left(M^c\right)}
c_{M^c}^{\left(M^c\right)}-
2{}^tc_{M^c}^{\left(M^c\right)}Q^{\left(M^c\right)}
c_{M^c}^{\left(M^c\right)}+
{}^t\tau^{\left(M\right)}Q^{\left(M\right)}\tau^{\left(M\right)}=
\]
\[
{}^tz^{\left(M^c\right)}Q^{\left(M^c\right)}z^{\left(M^c\right)}+
\gamma_M(\tau^{\left(M\right)}).
\]

\end{proof}

\section*{Asknowledgements}

\label{4}

I would like to thank the governing body of the Institute of
Mathematics and Informatics at the Bulgarian Academy of Sciences for
creating perfect conditions of work.

\section*{Funding}

\label{5}

This paper was partially supported by the Bulgarian Science Fund
[grand number I02/18].

\end{document}